\documentclass[11pt,reqno]{amsart}\setlength{\textwidth}{138.0mm}
\setlength{\oddsidemargin}{-1.0mm}\setlength{\evensidemargin}{-1.0mm}
\usepackage{amssymb,amsmath}\usepackage{subfigure}\newcommand{\be}{\begin{eqnarray}}
\newcommand{\ee}{\end{eqnarray}}
\newcommand{\beno}{\begin{eqnarray*}}
\newcommand{\eeno}{\end{eqnarray*}}
\newcommand{\eps}{{\mbox{$\epsilon$}}}\newcommand{\e}{{\varepsilon}}\newcommand{\R}{{\mathbb R}}\newcommand{\Z}{{\mathbb Z}}\newcommand{\LL}{\mathcal L}\newcommand{\Pk}{{\mathcal P}}

\newcommand{\om}{\omega}
\newcommand{\supp}{\operatorname{supp}}\newtheorem{theorem}{Theorem}\newtheorem{lemma}[theorem]{Lemma}\newtheorem{corollary}[theorem]{Corollary}\newtheorem{prop}[theorem]{Proposition}\newtheorem{conj}[theorem]{Conjecture}\theoremstyle{definition}\newtheorem{defi}[theorem]{Definition}\newtheorem{example}[theorem]{Example}\theoremstyle{remark}\newtheorem{remark}[theorem]{Remark}\numberwithin{equation}{section}\input epsf.sty\begin{document}\thispagestyle{empty}
\newcommand{\diag}{\operatorname{diag}}
%
%
%
%
%
%
%
%
%
%
%
%
\newcommand{\G}{{\mathcal G}}
\newcommand{\C}{{\mathbb C}}
\newcommand{\Pa}{{P_{1,\theta}(y)}}
\newcommand{\Pb}{{P_{2,\theta}(y)}}
\newcommand{\Pshp}{{P_{1,t}^\sharp(x)}}
\newcommand{\Pflt}{{P_{1,t}^\flat(x)}}
\newcommand{\OTL}{{\Omega(\theta,\ell)}}
\newcommand{\rsz}{{R(\theta^*)}}
\newcommand{\phitil}{{\tilde{\varphi}_t}}
\newcommand{\lambdatil}{{\tilde{\lambda}}}
\newcommand{\al}{\alpha}
\newcommand{\ftpI}{\langle (g^2 + \tau^2 f^2)^{\frac{p}{2}} \rangle_{I}}
\newcommand{\fI}{\langle f \rangle_{I}}
\newcommand{\fpI}{\langle |f|^p \rangle_{I}}
\newcommand{\gI}{\langle g \rangle_{I}}
\newcommand{\mtp}{|(f,h_J)| = |(g,h_J)|}
\newcommand{\fJp}{\langle f \rangle_{J^+}}
\newcommand{\fJm}{\langle f \rangle_{J^-}}
\newcommand{\gJp}{\langle f \rangle_{J^+}}
\newcommand{\gJm}{\langle f \rangle_{J^-}}
\newcommand{\fIpm}{\langle f \rangle_{J^{\pm}}}
\newcommand{\gIpm}{\langle g \rangle_{J^{\pm}}}
\newcommand{\fIp}{\langle f \rangle_{I^{+}}}
\newcommand{\fIm}{\langle f \rangle_{I^{-}}}
\newcommand{\gIp}{\langle g \rangle_{I^{+}}}
\newcommand{\gIm}{\langle g \rangle_{I^{-}}}
\newcommand{\yone}{y_1}
\newcommand{\ytwo}{y_2}
\newcommand{\yonep}{y_1^+}
\newcommand{\ytwop}{y_2^+}
\newcommand{\yonem}{y_1^-}
\newcommand{\ytwom}{y_2^-}
\newcommand{\ythr}{y_3}

\newcommand{\xone}{x_1}
\newcommand{\xtwo}{x_2}
\newcommand{\xonep}{x_1^+}
\newcommand{\xtwop}{x_2^+}
\newcommand{\xonem}{x_1^-}
\newcommand{\xtwom}{x_2^-}
\newcommand{\xthr}{x_3}
\newcommand{\ptwo}{\frac{p}{2}}
\newcommand{\alp}{\al^+}
\newcommand{\alm}{\al^-}
\newcommand{\alpm}{\al^{\pm}}
\newcommand{\twrp}{\frac{2}{p}}
\newcommand{\sbeta}{\sqrt{\om^2 - \tau^2}}
\newcommand{\ssbeta}{\om^2 - \tau^2}
\newcommand{\sign}{\operatorname{sign}}
\newcommand{\rst}[1]{\ensuremath{{\mathbin\upharpoonright}%
\newcommand{\and}{\operatorname{and}}

\raise-.5ex\hbox{$#1$}}}

\title[]{{Laminates meet Burkholder functions}}
\author{Nicholas Boros}\address{Nicholas Boros, Dept. of Math., Michigan State University.{\tt borosnic@msu.edu}}
\author{L\'aszl\'o Sz\'ekelyhidi Jr.}\address{L\'aszl\'o Sz\'ekelyhidi Jr., Universit\"at Leipzig.  {\tt szekelyhidi@math.uni-leipzig.de}}
\author{Alexander Volberg}\address{Alexander Volberg, Dept. of  Math., Michigan State University. 
{\tt volberg@math.msu.edu}}

\begin{abstract}
Let $R_1$ and $R_2$ be the planar Riesz transforms.  We compute the $L^p-$operator norm of a quadratic perturbation of $R_1^2 - R_2^2$ as 
\beno
\left\|{\left(\begin{array}{c}
R_1^2-R_2^2\,,\,
\tau I\end{array}\right)}\right\|_{L^p(\C, \C) \to L^p(\C, \C^2)} = ((p^*-1)^2 + \tau^2)^{\frac 12}\,,
\eeno
for $1<p<2$ and $\tau^2 \leq p^*-1$ or $2 \leq p < \infty$ and $\tau \in \R.$  To obtain the lower bound estimate of, what we are calling a quadratic perturbation of $R_1^2 - R_2^2,$ we discuss a new approach of constructing laminates (a special type of probability measure on matrices) to approximate the Riesz transforms.
\end{abstract}
\maketitle

\section{Introduction}
Determining the exact $L^p-$operator norm of a singular integral is a difficult task to accomplish in general.  Classically, the Hilbert transform's operator norm was determined by Pichorides \cite{Pic}.  More recently, the real and imaginary parts of the Ahlfors--Beurling operator were determined by Nazarov,Volberg \cite{NaVo} and Geiss, Montgomery-Smith, Saksman \cite{GMS}.  Considering the full Alhfors--Beurling operator, the lower bound was determined as $p^*-1$ by Lehto \cite{Le}, or a new proof of this fact in \cite{BV}.  On the other hand the upper bound has been quite a bit more difficult.  Iwaniec conjectured in \cite{Iw} that the upper bound is $p^*-1.$  However, attempts at getting the conjectured upper bound of $p^*-1$ have been unsuccessful so far.  The works of Ba\~nuelos,Wang \cite{BaWa}, Nazarov, Volberg \cite{NaVo}, Ba\~nuelos, M\'endez-Hern\'andez \cite{BaMH}, Dragi\v{c}evi\`c, Volberg \cite{DrVo}, Ba\~nuelos, Janakiraman \cite{BaJa}, and Borichev, Janakiraman, Volberg \cite{BJV} have progressively gotten closer to $p^*-1$ as the upper bound, but no one has yet achieved it.  We note that all of these upper bound estimates crucially rely on Burkholder's estimates \cite{Bu} of the martingale transform.  Burkholder's estimates of the martingale transform even play a crucial part in determining the sharp lower bounds of the real and imaginary parts of the Alhfors-Beurling operator as seen in Geiss, Montgomery-Smith, Saksman \cite{GMS}.  Also, in \cite{GMS} it becomes clear that if one can determine the $L^p-$operator norm of some perturbation of the martingale transform then one can use it to determine the $L^p-$operator norm of the same perturbation of the real or imaginary part of the Alhfors-Beurling operator.  In \cite{BJV1}, \cite{BJV2} and \cite{BJV3} the $L^p-$operator norm was computed for a ``quadratic perturbation" of the martingale transform using the Bellman function technique, which is similar to how Burkholder originally did, for $\tau = 0,$ in \cite{Bu}.   By ``quadratic perturbation", we are referring to the quantity $(Y^2+\tau^2X^2)^{\frac 12},$ where $\tau \in \R$ is small, $X$ is a martingale and $Y$ is the corresponding martingale transform.  

We claim that our operator \beno
\left\|{\left(\begin{array}{c}
R_1^2-R_2^2\,,\,
\tau I\end{array}\right)}\right\|_{L^p(\C, \C) \to L^p(\C, \C^2)} \,,
\eeno represents a simpler model of the difficulties encountered by treating the Ahlfors--Beurling transform. An extra interesting feature of this operator is that there it breaks the symmetry between $p\in (1,2)$ and $p\in (2,\infty)$. Another interesting feature is that it is {\it simpler} to treat than a seemingly simpler perturbation $R_1^2-R_2^2 +\tau I$, whose norm in $L^p$ is horrendously difficult to find. The last assertion deserves a small elaboration. Burkholder found the norm in $L^p$ of the martingale transform, where the family of transforming multipliers $\epsilon_I$ run over $[-1,1]$. Nobody knows  how (and it seems very difficult) to find the norm of  the martingale transform, where the family of transforming multipliers $\epsilon_I$ run over $[-0.9,1.1]$. That would be the norm in $L^p$ of $R_1^2-R_2^2 +0.1\,I= 1.1 R_1^2-0.9 R_2^2$. However, if one writes the perturbation in the form we did this above, the problem immediately becomes much more treatable, and we manage to find the precise formula for the norm for a wide range of $p$'s and $\tau$'s. 

The method of Bourgain \cite{Bo}, for the Hilbert transform, which was later generalized for a large class of Fourier multiplier operators by Geiss, Montgomery-Smith, Saksman \cite{GMS}, is to discretize the operator and generalize it to a higher dimensional setting.  This operator in the higher dimensional setting will turn out to have the same operator norm and it naturally connects with discrete martingales, if done in a careful and clever way.  At the end, one has the operator norm of the singular integral bounded below by the operator norm of the martingale transform, which Burkholder found in \cite{Bu}.  This approach can be used for estimating $\left\|{\left(\begin{array}{c}
R_1^2-R_2^2\,,\,
\tau I\end{array}\right)}\right\|_{L^p \to L^p}$ from below as well, see \cite{BV}. However, we will present an entirely different approach to the problem.  

Rather than working with estimates on the martingale transform, we only need to consider the ``Burkholder-type" functions that were used to find those sharp estimates on the martingale transform.  More specifically, we analyze the behavior of the ``Burkholder-type" functions, $U$ and $v$ found in \cite{BJV1}, \cite{BJV2} and \cite{BJV3} (and reiterated in Definition \ref{definitionUv} and Theorem \ref{leastbiconcave}), associated with determining the $L^p-$operator norm of the quadratic perturbation of the martingale transform.  Using the fact that $U$ is the least bi-concave majorant of $v$ (in the appropriately chosen coordinates), in addition to some of the ways in which the two functions interact will allow us to construct an appropriate sequence of laminates, which approximate the push forward of the $2-$dimensional Lebesgue measure by the Hessian of a smooth function with compact support.  Once the appropriate sequence of laminates is constructed, we are finished since the norm of the Reisz transforms can be approximated by certain fractions of the partial derivatives of smooth functions.  The beauty of this method is that it quickly gets us the sharp lower bound constant with very easy calculations.  This lower bound argument is discussed in Section \ref{section2}.

The use of laminates for obtaining lower bounds on $L^p$-estimates has been first recognized by D.~Faraco. In \cite{DF} he introduced the so-called staircase laminates. These have also been used in refined versions of convex integration in \cite{AFS} in order to construct 
special quasiregular mappings with extremal integrability properties. Staircase laminates also proved useful in several other problems, see for instance \cite{CFM}.

In the current note we will use a continuous, rather than a discrete laminate. More importantly, in contrast to the techniques used in \cite{DF,AFS,CFM} we will construct the laminate indirectly using duality. The advantage is essentially computational, we very quickly obtain the sharp lower bounds. Indeed, the lower bound follows from the following inequality 
$$
f(1,1)\leq \frac{1}{(1-k)}\int_1^\infty [f(kt,t)+f(t,kt)]t^{-p_k}\frac{dt}{t},
$$
where $k\in (-1,1)$ and $p_k=\frac{2}{1-k}$, and valid for all biconvex functions $f:\R^2\to\R$ with $f(x)=o(|x|^{p_k})$ as $|x|\to\infty$ (see Section \ref{s:laminate}).

The ``Burkholder--type" functions $U$ and $v$ also play a crucial role in obtaining the sharp upper bound estimate as well.  With the ``Burkholder--type" functions we are able to extend sharp estimates of $(Y^2+\tau^2X^2)^{\frac 12},$ obtained in \cite{BJV1}, \cite{BJV2} and \cite{BJV3}, from the discrete martingale setting to the continuous martingale setting.  The use of ``heat martingales", as in \cite{BaMH} and \cite{BaJa}, will allow us to connect the Riesz transforms to the continuous martingales estimate, without picking up any additional constants.  This upper bound argument is presented in Section \ref{s:upperbound}. 

\section{Lower Bound Estimate}\label{section2}

\subsection{\bf Laminates and gradients}\hfill

We denote by $\R^{m\times n}$ the space of real $m\times n$ matrices and by $\R^{n\times n}_{sym}$ the set of real $n\times n$ symmetric matrices.

\begin{defi}
We say that a function $f:\R^{m\times n} \to \R$ is \textit{rank-one convex}, if $t\mapsto f(A+tB)$ is convex for all $A,B \in \R^{m\times n}$ with $\textrm{rank }B= 1$.
\end{defi}

Let $\Pk(\R^{m\times n})$ denote the set of all compactly supported probability measures on $\R^{m \times n}$.
For $\nu \in \Pk$ we denote by $\overline{\nu} = \int_{\R^{m\times n}}{X d\nu(X)}$ the center of mass or \textit{barycenter} of $\nu.$

\begin{defi}\label{d:laminate}
A measure $\nu \in \Pk$ is called a \textit{laminate}, denoted $\nu\in\mathcal{L}$, if 
\begin{equation}\label{e:deflam}
f(\overline{\nu}) \leq \int_{\R^{m\times n}}{f d\nu}
\end{equation} 
for all rank-one convex functions $f$. The set of laminates with barycenter $0$ is denoted by $\mathcal{L}_0(\R^{m\times n})$. 
\end{defi}

Laminates play an important role in several landmark applications of convex integration for producing unexpected counterexamples, see for instance \cite{MS99,KMS,AFS,SzCI,CFM}. For our purposes the case of $2\times 2$ symmetric matrices is of relevance, therefore in the following we restrict attention to this case. The key point is that laminates can be viewed as probability measures recording the gradient distribution of maps, see Corollary \ref{c:approxlaminate} below. This is by now a very standard technique.
For the convenience of the reader we provide the main steps of the argument. Detailed proofs of these statements can be found for example in \cite{MS99,Kirchheim,SzCI}. 

\begin{defi}\label{d:prelam}
Given a set $U\subset\R^{2\times 2}$ we call $\mathcal{PL}(U)$ the set of \emph{prelaminates} generated in $U$. This is the smallest
class of probability measures on $\R^{2\times 2}$ which
\begin{itemize}
\item contains all measures of the form $\lambda \delta_A+(1-\lambda)\delta_B$ with $\lambda\in [0,1]$ and $\textrm{rank}(A-B)=1$;
\item is closed under splitting in the following sense: if $\lambda\delta_A+(1-\lambda)\tilde\nu$ belongs to $\mathcal{PL}(U)$ for some $\tilde\nu\in\Pk(\R^{2\times 2})$ and $\mu$ also belongs to $\mathcal{PL}(U)$ with $\overline{\mu}=A$, then also $\lambda\mu+(1-\lambda)\tilde\nu$ belongs to $\mathcal{PL}(U)$.
\end{itemize}
\end{defi} 

The order of a prelaminate denotes the number of splittings required to obtain the measure from a Dirac measure.

\begin{example}\label{ex:2lam}
The measure
$$
\frac{1}{4}\delta_{\diag(1,1)}+\frac{1}{4}\delta_{\diag(-1,1)}+\frac{1}{2}\delta_{\diag(0,-1)},
$$
where 
$$
\diag(x,y):=\begin{pmatrix}x&0\\0&y\end{pmatrix},
$$
is a second order prelaminate with barycenter $0$.
\end{example}

It is clear from the definition that $\mathcal{PL}(U)$ consists of atomic measures. Also, from a repeated application of Jensen's inequality it follows that $\mathcal{PL}\subset\mathcal{L}$. The following two results are standard (see \cite{AFS,Kirchheim,MS99,SzCI}). 

\begin{lemma}\label{l:prelaminate}
Let $\nu=\sum_{i=1}^N\lambda_i\delta_{A_i}\in\mathcal{PL}(\R^{2\times 2}_{sym})$ with $\overline{\nu}=0$. Moreover, let
$0<r<\tfrac{1}{2}\min|A_i-A_j|$ and $\delta>0$. For any bounded domain $\Omega\subset\R^2$ there exists $u\in W^{2,\infty}_0(\Omega)$ such that $\|u\|_{C^1}<\delta$ and for all $i=1\dots N$
$$
\bigl|\{x\in\Omega:\,|D^2u(x)-A_i|<r\}\bigr|=\lambda_i|\Omega|.
$$
\end{lemma}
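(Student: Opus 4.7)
The plan is to induct on the order of the prelaminate $\nu$. The base case is an explicit ``strip'' construction along the rank-one connection, and the inductive step patches lower-order constructions onto the level set of a partial Hessian by a Vitali covering.

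\textbf{Base case (order one).} Here $\nu=\lambda\delta_A+(1-\lambda)\delta_B$ with $A-B$ a rank-one symmetric matrix, so $A-B=\sigma\,e\otimes e$ for some $\sigma\in\R$ and unit $e\in\R^2$. On a unit square $Q$ I would construct $w_0(x)=\tfrac12\langle Bx,x\rangle+\phi(e\cdot x)+\psi(x)$, where $\phi''$ is a step function alternating between two values on strips orthogonal to $e$ with proportions $\lambda$ and $1-\lambda$, so that $D^2 w_0\in\{A,B\}$ on the corresponding strips, while $\psi$ is a small corrector (a secondary fine-scale lamination in a compatible rank-one direction, as in the chessboard construction of \cite{MS99,Kirchheim}) that brings $w_0$ into $W^{2,\infty}_0(Q)$ while preserving the exact proportions. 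Rescaling $w_0$ and placing disjoint copies on a Vitali cover of $\Omega$ that exhausts $\Omega$ up to a null set then produces the required $u$, with $\|u\|_{C^1}$ controlled by the rescaling factor.

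\textbf{Inductive step.} Assume the lemma for prelaminates of order $\le k$ and let $\nu$ have order $k+1$. By the definition of $\mathcal{PL}$, write $\nu=\lambda\mu+(1-\lambda)\tilde\nu$ where $\nu':=\lambda\delta_A+(1-\lambda)\tilde\nu\in\mathcal{PL}$ has order $\le k$, $\mu\in\mathcal{PL}$ has order $\le k$, and $\overline{\mu}=A$. Apply the induction hypothesis to $\nu'$ on $\Omega$ with parameters $r'\ll r$ and a small $\delta_1$, producing $u_1\in W^{2,\infty}_0(\Omega)$; set $E_A:=\{x\in\Omega:|D^2 u_1-A|<r'\}$, so $|E_A|=\lambda|\Omega|$. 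Choose a disjoint Vitali cover of $E_A$ by balls $B_j$ exhausting $E_A$ up to a null set, and apply the induction hypothesis on each $B_j$ to the translated measure $\mu-A$ (the pushforward of $\mu$ by $X\mapsto X-A$, which has barycenter $0$) with radius $r-r'$ and a small $\delta_j$, obtaining $v_j\in W^{2,\infty}_0(B_j)$ of correspondingly small $C^1$ norm. Set $u:=u_1$ on $\Omega\setminus\bigcup_j B_j$ and $u:=u_1+v_j$ on $B_j$; then $u\in W^{2,\infty}_0(\Omega)$ and $\|u\|_{C^1}<\delta$, and on $B_j$ the identity $D^2 u=D^2 u_1+D^2 v_j$ together with the triangle inequality gives $|D^2 u-A_i|<r$ on the correct subsets for each atom $A_i$ of $\mu$. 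Summing over $j$ (using $|\bigcup_j B_j|=|E_A|$) yields $|\{|D^2 u-A_i|<r\}|=\lambda\,\mu(\{A_i\})|\Omega|$ for atoms $A_i$ inherited from $\mu$; outside $E_A$, the atoms of $\tilde\nu$ are hit by $u_1$ with the remaining proportions, totalling $\nu(\{A_i\})|\Omega|$ for every atom of $\nu$.

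\textbf{Main obstacle.} The genuinely delicate ingredient is the base-case construction yielding \emph{exact} proportions while remaining in $W^{2,\infty}_0(Q)$: the naive one-dimensional strip ansatz with $\phi''\in\{0,\sigma\}$ is incompatible with the condition $\int\phi''=0$ needed for $\phi\in C^1_c$, so one must absorb the mean value by a secondary lamination in a compatible direction, which is possible in the symmetric $2\times 2$ setting because any two symmetric matrices are connected by a chain of rank-one symmetric directions. The remainder of the induction is bookkeeping, provided the radii $r'$ and thresholds $\delta_j$ are chosen recursively small enough that the triangle inequality carries the partial Hessians into the prescribed $r$-neighbourhoods of the full atoms of $\nu$.
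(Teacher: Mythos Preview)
The paper does not actually prove this lemma: it is stated as ``standard'' and referred to \cite{AFS,Kirchheim,MS99,SzCI}. Your inductive scheme (induction on the order of the prelaminate, with a Vitali patching in the inductive step) is precisely the argument carried out in those references, and your inductive step is correctly set up: $\nu'=\lambda\delta_A+(1-\lambda)\tilde\nu$ does have barycenter $0$ because $\overline{\nu}=\lambda\overline{\mu}+(1-\lambda)\overline{\tilde\nu}=\lambda A+(1-\lambda)\overline{\tilde\nu}=0$, and the translated measure $\mu-A$ is again a prelaminate since translation preserves rank-one connections.

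Your base case, however, is framed around the wrong difficulty. Since $\overline{\nu}=0$ and $A-B=\sigma\,e\otimes e$, one has $B=-\lambda\sigma\,e\otimes e$ and $A=(1-\lambda)\sigma\,e\otimes e$; both matrices are already rank one, so the quadratic term $\tfrac12\langle Bx,x\rangle$ in your ansatz is itself a function of $e\cdot x$ and can be absorbed into $\phi$. With the natural choice $\phi''\in\{(1-\lambda)\sigma,-\lambda\sigma\}$ the mean of $\phi''$ vanishes automatically, so the ``$\int\phi''=0$'' obstruction you flag is self-inflicted. The genuine obstacle is different: a one-variable profile $\phi(e\cdot x)$ cannot satisfy $W^{2,\infty}_0$ boundary conditions on a two-dimensional domain (it is constant along lines orthogonal to $e$), and a naive cutoff destroys the exact-measure identities $\sum_i\lambda_i=1$. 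This is what the cited references handle, either by explicit ``roof''/``tent'' constructions on rectangles (Kirchheim) or by iterating the lamination in the thin boundary layer using, as you correctly note, that any two symmetric $2\times2$ matrices are joined by a chain of rank-one symmetric segments. Once you replace your description of the obstacle with this one, your sketch matches the standard proof.
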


\begin{lemma}\label{l:laminate}
Let $K\subset\R^{2\times 2}_{sym}$ be a compact convex set and $\nu\in\mathcal{L}(\R^{2\times 2}_{sym})$ with $\supp\nu\subset K$. For any relatively open set $U\subset\R^{2\times 2}_{sym}$ with $K\subset\subset U$ there exists a sequence $\nu_j\in \mathcal{PL}(U)$ of prelaminates with $\overline{\nu}_j=\overline{\nu}$ and $\nu_j\overset{*}{\rightharpoonup}\nu$.
\end{lemma}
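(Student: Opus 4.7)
The plan is to prove this lemma by a Hahn--Banach separation argument of the type now standard in the theory of laminates (see \cite{Kirchheim,MS99,SzCI}). Let
\[
\mathcal{C}\,:=\,\overline{\{\mu\in\mathcal{PL}(U):\overline{\mu}=\overline{\nu}\}}^{\,*}
\]
denote the weak-$*$ closure inside the space of Borel probability measures on $\overline{U}$; it is convex and weak-$*$ compact. Suppose for contradiction that $\nu\notin\mathcal{C}$. I would then choose an intermediate open set $V$ with $K\subset V\subset\subset U$, and apply Hahn--Banach in the dual pair $(C_0(\R^{2\times 2}_{sym}),\mathcal{M})$ to produce a continuous function $g$ supported in $V$ and a real number $\alpha$ with
\[
\int g\,d\nu\,<\,\alpha\,\le\,\int g\,d\mu\qquad\text{for every }\mu\in\mathcal{C}.
\]

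Next I would introduce the \emph{$U$-restricted rank-one convex envelope}
\[
R_Ug(A)\,:=\,\inf\!\Big\{\int g\,d\mu:\mu\in\mathcal{PL}(U),\,\overline{\mu}=A\Big\},
\]
defined for $A\in U$ and extended by $+\infty$ elsewhere. The splitting rule in Definition \ref{d:prelam} immediately gives rank-one convexity of $R_Ug$: if $A=tA_1+(1-t)A_2$ is a rank-one split with $A_1,A_2\in U$, and $\mu_i\in\mathcal{PL}(U)$ have barycenters $A_i$, then $t\mu_1+(1-t)\mu_2\in\mathcal{PL}(U)$ has barycenter $A$, giving $R_Ug(A)\le tR_Ug(A_1)+(1-t)R_Ug(A_2)$. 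Clearly $R_Ug\le g$ everywhere, and the separation above gives $R_Ug(\overline{\nu})\ge\alpha$. Combining,
\[
R_Ug(\overline{\nu})\,\ge\,\alpha\,>\,\int g\,d\nu\,\ge\,\int R_Ug\,d\nu,
\]
contradicting $\nu\in\mathcal{L}$ applied to the rank-one convex function $R_Ug$.

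The hard part will be the technical step of legitimately testing $\nu$ against $R_Ug$: this function is not a priori bounded, continuous, or real-valued on all of $\R^{2\times 2}_{sym}$, whereas Definition \ref{d:laminate} is phrased for bona fide rank-one convex functions on the full matrix space. The buffer $K\subset\subset V\subset\subset U$ is exactly what allows this to be repaired, since it guarantees that $R_Ug$ is bounded on a neighborhood of $\supp\nu$ (because $g$ itself is supported inside $V$), so one can approximate $R_Ug$ from below by bounded continuous rank-one convex functions via a mollification procedure compatible with rank-one convexity, apply Definition \ref{d:laminate} to each approximant, and pass to the limit by monotone convergence. This completes the contradiction and shows $\nu\in\mathcal{C}$; extracting from $\mathcal{C}$ a weak-$*$ convergent sequence $\nu_j\in\mathcal{PL}(U)$ with $\overline{\nu_j}=\overline{\nu}$ and $\nu_j\overset{*}{\rightharpoonup}\nu$ then finishes the proof.
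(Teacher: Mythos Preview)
The paper itself does not prove Lemma~\ref{l:laminate}; it is stated as a standard result with references to \cite{AFS,Kirchheim,MS99,SzCI}. Your Hahn--Banach/duality approach is indeed one of the standard routes to this result, and the overall architecture---separate, form the rank-one convex envelope, contradict the laminate inequality---is correct.

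There are, however, two genuine gaps in your execution. First, Hahn--Banach in the pair $(C_0,\mathcal{M})$ produces a continuous $g$, but gives you no control on $\supp g$; you cannot simply declare that $g$ is supported in $V$. This is minor (one really only needs $g$ continuous on $\overline{U}$), but your later argument appeals to this unsupported claim.

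The substantive gap is in your resolution of the localization problem. You correctly identify that $R_Ug$ is rank-one convex only on $U$, whereas Definition~\ref{d:laminate} requires testing against functions $f:\R^{2\times 2}\to\R$ that are rank-one convex \emph{globally}. Your proposed fix---mollification---does not achieve this: convolving $R_Ug$ with a smooth kernel yields a function that is rank-one convex only on a slightly smaller open subset of $U$, never on all of $\R^{2\times 2}_{sym}$. (Incidentally, mollification of a rank-one convex function gives an approximation from \emph{above} by Jensen's inequality along rank-one lines, not from below as you state; but even with the correct direction the domain issue persists.) Thus you cannot legitimately apply Definition~\ref{d:laminate} to the mollified functions, and the contradiction is not closed.

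The correct repair, carried out in the cited references, exploits the convexity of $K$: one replaces $g$ by $g+N\psi$ for a nonnegative convex function $\psi$ vanishing exactly on $K$, takes the \emph{global} rank-one convex envelope $R(g+N\psi)$ (which is bounded, hence continuous, hence a legitimate test function), and shows that as $N\to\infty$ this envelope at $\bar\nu$ increases to $R_Ug(\bar\nu)\ge\alpha$, while its integral against $\nu$ stays below $\int g\,d\nu<\alpha$. Convexity of $K$ is what makes this penalization compatible with rank-one convexity; this is where the hypothesis is actually used, and it is the step your sketch is missing.
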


By combining Lemmas \ref{l:prelaminate} and \ref{l:laminate} and using a simple mollification, we obtain the following statement, linking laminates supported on symmetric matrices with second derivatives of functions.

\begin{corollary}\label{c:approxlaminate}
Let $\nu\in\mathcal{L}_0(\R^{2\times 2}_{sym})$. Then there exists a sequence $u_j\in C_c^{\infty}(B_1(0))$ with uniformly bounded second derivatives, such that
$$
\int_{B_1(0)} \phi(D^2u_j(x))\,dx\,\to\,\int_{\R^{2\times 2}_{sym}}\phi\,d\nu
$$
for all continuous $\phi:\R^{2\times 2}_{sym}\to\R$.

\end{corollary}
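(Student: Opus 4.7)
I would chain Lemmas \ref{l:prelaminate} and \ref{l:laminate} and then regularize by convolution. Since $\nu\in\mathcal{L}_0(\R^{2\times 2}_{sym})$ has compact support by the definition of $\Pk$, I first fix a compact convex set $K$ with $\supp\nu\subset K$ and a bounded relatively open $U\subset\R^{2\times 2}_{sym}$ with $K\subset\subset U$. Lemma \ref{l:laminate} then supplies prelaminates
\[
\nu_k=\sum_{i=1}^{N_k}\lambda^{(k)}_i\delta_{A^{(k)}_i}\in\mathcal{PL}(U),\qquad \overline{\nu_k}=0,\qquad \nu_k\overset{\ast}{\rightharpoonup}\nu,
\]
with atoms $A^{(k)}_i$ uniformly bounded by $M:=\sup\{|A|:A\in\overline{U}\}$.

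Next, for each $k$ I pick $r_k\downarrow 0$ with $r_k<\tfrac12\min_{i\neq j}|A^{(k)}_i-A^{(k)}_j|$, and apply Lemma \ref{l:prelaminate} on $\Omega=B_1(0)$ with these parameters and $\delta=1/k$. This yields $v_k\in W^{2,\infty}_0(B_1(0))$ with $\|v_k\|_{C^1}<1/k$,
\[
\bigl|\{x\in B_1(0):|D^2 v_k(x)-A^{(k)}_i|<r_k\}\bigr|=\lambda^{(k)}_i|B_1(0)|,
\]
and hence $\|D^2 v_k\|_{L^\infty}\leq M+1$ uniformly in $k$. For continuous $\phi$, letting $\omega_\phi$ denote its modulus of continuity on the ball of radius $M+1$ in $\R^{2\times 2}_{sym}$, the displayed identity together with $\nu_k\overset{\ast}{\rightharpoonup}\nu$ gives
\[
\frac{1}{|B_1(0)|}\int_{B_1(0)}\phi(D^2 v_k(x))\,dx\;=\;\int\phi\,d\nu_k\,+\,O(\omega_\phi(r_k))\;\longrightarrow\;\int\phi\,d\nu.
\]

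Finally, I set $u_k:=v_k\ast\rho_{\varepsilon_k}$ for a standard mollifier $\rho_{\varepsilon}$. Because $v_k$ has compact support in $B_1(0)$, for $\varepsilon_k$ small enough we have $u_k\in C_c^\infty(B_1(0))$, and $\|D^2 u_k\|_{L^\infty}\leq\|D^2 v_k\|_{L^\infty}$ by Young's inequality, so the uniform Hessian bound persists. A diagonal choice $\varepsilon_k\downarrow 0$ made after $v_k$ is fixed (say so that $\|D^2 u_k-D^2 v_k\|_{L^1(B_1)}<1/k$), combined with uniform continuity of $\phi$ on the bounded Hessian range, yields $\int_{B_1(0)}|\phi(D^2u_k)-\phi(D^2 v_k)|\,dx\to 0$, and combining with the previous display gives the claim (up to the volume factor $|B_1(0)|$, which is the natural normalization for the integral in the statement).

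The only mildly delicate point is the mollification: smoothing could in principle redistribute the values of $D^2 v_k$ away from the clusters around the atoms $A^{(k)}_i$. But the two errors---prelaminate-to-laminate (controlled by $k$ and $r_k$) and smoothing (controlled by $\varepsilon_k$)---are independent, so freezing $v_k$ first and then choosing $\varepsilon_k$ sufficiently small makes the argument routine; there is no serious obstacle.
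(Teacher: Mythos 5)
Your proof is correct and is exactly what the paper intends; the paper's ``proof'' is the sentence preceding the corollary, which says to combine Lemma \ref{l:laminate} with Lemma \ref{l:prelaminate} and mollify, and your write-up is a careful unwinding of that plan. The two small technicalities worth noting---the statement is off by the normalization $|B_1(0)|$ (which you flagged, and which is harmless since the corollary is only used inside the ratio (\ref{reductiontouselaminatethm})), and a $W^{2,\infty}_0(B_1(0))$ function need not have support strictly inside the open ball (fix by applying Lemma \ref{l:prelaminate} on a slightly smaller ball before mollifying)---are routine and do not affect the argument.
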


\subsection{\bf Laminates and lower bounds}\hfill

Let $\tau \in \R$ be fixed.  Our goal is to find
\be\label{goal}
\sup_{\varphi\in\mathcal{S}(\R^2)}\frac{\left\|\bigl( (R_1^2\varphi - R_2^2\varphi)^2+\tau^2 (R_1^2\varphi + R_2^2\varphi)^2\bigr)^{1/2} \right\|_p}{\|R_1^2\varphi + R_2^2\varphi\|_p},
\ee
for the planar Riesz transforms $R_1$ and $R_2$, where $\mathcal{S}(\R^2)$ is the Schwartz class. We can rework the Riesz transforms acting on $\varphi$ into the 
second derivative of a function $u \in \mathcal{S}(\R^2)$ in the following way.
\beno
R_i^2 \varphi = \left(-\frac{\xi_i^2}{|\xi|^2}\widehat{\varphi} \right)^{\vee} = \partial_i^2u,
\eeno
where  `` $\widehat{}$ " denotes the Fourier transform, ``$\vee{}$" denotes the inverse Fourier transform and $-\Delta u=\varphi$.  So (\ref{goal}) is equivalent to 
\be\label{Fourierreduction}
\sup_{u\in\mathcal{S}(\R^2)}\frac{\int{|(\partial_{11}^2u - \partial_{22}^2u)^2+\tau^2(\partial_{11}^2u + \partial_{22}^2u)^2|^{\frac p2}}}{\int{|\partial_{11}^2u + \partial_{22}^2u|^p}}.
\ee
Let $A_{ij}$ denote, as usual, the $ij$-entry of a matrix $A$ and put
\begin{equation}\label{e:phi}
\begin{split}
\phi_1(A) &= |(A_{11} - A_{22})^2+\tau^2(A_{11} + A_{22})^2|^{\frac p2}\, ,\\
\phi_2(A) &= |A_{11} + A_{22}|^p\,.
\end{split}
\end{equation}
Using a standard cut-off argument we can write replace $\mathcal{S}(\R^2)$ with $C_c^{\infty}(\R^2)$ and write (\ref{Fourierreduction}) as 
\be\label{pushforward}
\sup_{u\in C^{\infty}_c(\R^2)}\frac{\int{\phi_1(D^2u) dx}}{\int{\phi_2(D^2u) dx}}.
\ee
From Corollary \ref{c:approxlaminate} we deduce that
\be\label{reductiontouselaminatethm}
\sup_{u\in C^{\infty}_c(\R^2)}\frac{\int{\phi_1(D^2u)\, dx}}{\int{\phi_2(D^2u)\, dx}}\quad \geq\, \sup_{\nu\in\mathcal{L}_0(\R^{2\times 2}_{sym})} \frac{\int{\phi_1\, d\nu}}{\int{\phi_2\, d\nu}}. 
\ee
We remark in passing that, whether for arbitrary continuous functions $\phi_1,\phi_2$ one has equality above, is directly related to the celebrated conjecture of Morrey concerning rank-one and quasiconvexity in $\R^{2\times 2}_{sym}$, see for instance \cite{MuLecturenote}.
 
Our goal is therefore to prove the following
\begin{theorem}\label{t:laminate}
For any $1<p<\infty$ and $\tau\in\R$ 
there exists a sequence $\nu_N\in\mathcal{L}_0(\R^{2\times 2}_{sym})$ such that
$$
\frac{\int{\phi_1 d\nu_N}}{\int{\phi_2 d\nu_N}} \,\to\, ((p^*-1)^2+\tau^2)^{\tfrac{p}{2}}.
$$
\end{theorem}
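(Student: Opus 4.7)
The plan is to encode the stated key inequality as a biconvex laminate supported on two half-rays in the diagonal plane, lift it to $\R^{2\times 2}_{sym}$, shift the barycenter to zero via the prelaminate of Example \ref{ex:2lam}, and tune the parameter $k$.

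First, for each $k\in(-1,1)$ I would define the probability measure
$$
\mu_k \;:=\; \frac{1}{1-k}\int_1^\infty\bigl(\delta_{\diag(kt,t)}+\delta_{\diag(t,kt)}\bigr)\,t^{-p_k-1}\,dt
$$
on the diagonal of $\R^{2\times 2}_{sym}$. Testing against $f\equiv 1$ gives total mass $\tfrac{2}{(1-k)p_k}=1$, and testing against the coordinate functions gives barycenter $\diag(1,1)$. The stated inequality is exactly Jensen's condition $f(\bar\mu_k)\le\int f\,d\mu_k$ for biconvex $f$ with $f=o(|x|^{p_k})$, so $\mu_k$ is a biconvex laminate centered at $\diag(1,1)$. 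Since any rank-one convex function on $\R^{2\times 2}_{sym}$ restricted to diagonals is biconvex in $(A_{11},A_{22})$, $\mu_k$ also qualifies as a laminate on $\R^{2\times 2}_{sym}$ in the sense of Definition \ref{d:laminate}.

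To shift the barycenter to the origin, I would compose $\mu_k$ with the prelaminate of Example \ref{ex:2lam}. Replacing the $\delta_{\diag(1,1)}$-atom there by $\mu_k$ (a valid splitting operation per Definition \ref{d:prelam}, since $\bar\mu_k=\diag(1,1)$) produces
$$
\nu_k\;:=\;\tfrac14\mu_k+\tfrac14\delta_{\diag(-1,1)}+\tfrac12\delta_{\diag(0,-1)}\;\in\;\mathcal{L}_0(\R^{2\times 2}_{sym}).
$$
A direct computation using the $p$-homogeneity of $\phi_1,\phi_2$ in $t$ yields
$$
\int\phi_1\,d\mu_k = \frac{2\bigl[(1-k)^2+\tau^2(1+k)^2\bigr]^{p/2}}{1-k}I_k,\qquad \int\phi_2\,d\mu_k = \frac{2(1+k)^p}{1-k}I_k,
$$
where $I_k:=\int_1^\infty t^{p-p_k-1}\,dt$, while the auxiliary atoms contribute the fixed quantities $\phi_1(-1,1)=2^p$, $\phi_2(-1,1)=0$, $\phi_1(0,-1)=(1+\tau^2)^{p/2}$, $\phi_2(0,-1)=1$. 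The ray contribution to the ratio is therefore $\bigl(\bigl(\tfrac{1-k}{1+k}\bigr)^2+\tau^2\bigr)^{p/2}$, independent of $I_k$.

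For $1<p\le 2$ I would then let $k\searrow 1-\tfrac2p$, the edge of the convergence regime $p_k>p$: $I_k\to\infty$ so the Dirac contributions become negligible, and the overall ratio $\int\phi_1\,d\nu_k/\int\phi_2\,d\nu_k$ tends to the on-ray value at $k=1-\tfrac2p$, namely $((q-1)^2+\tau^2)^{p/2}=((p^*-1)^2+\tau^2)^{p/2}$. A diagonal argument in $k$ extracts the sequence $\nu_N$. The hard part will be the range $p>2$: the sharp slope corresponds to $k_*=(2-p)/p<0$, which lies \emph{outside} the convergence regime, so $\int\phi_i\,d\mu_{k_*}$ both diverge. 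Here I would work with truncated ray measures $\mu_{k_*,T}$ (restricting $t\in[1,T]$), realizing each truncation as a bona fide prelaminate via a Faraco-style staircase of rank-one splittings that concentrates mass along the ray $\{y=\tfrac{2-p}{p}x\}$. Because the integrals $\int\phi_i\,d\mu_{k_*,T}$ share the same $T$-dependence, the ratio is already independent of $T$ and equals the desired $((p-1)^2+\tau^2)^{p/2}$; verifying that each intermediate prelaminate is a genuine element of $\mathcal{L}_0(\R^{2\times 2}_{sym})$, with auxiliary barycenter-correcting atoms made negligible by rescaling, is the chief technical difficulty in this regime.
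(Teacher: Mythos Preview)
Your approach for $1<p\le 2$ is essentially correct and close in spirit to the paper's, with one technical slip: your $\mu_k$ has unbounded support, so it is not in $\mathcal{P}(\R^{2\times 2}_{sym})$ (compactly supported probabilities) and hence not a laminate per Definition~\ref{d:laminate}; in particular Corollary~\ref{c:approxlaminate} does not apply. The paper avoids this by fixing $k=1-\tfrac{2}{p}$ exactly (so $p_k=p$) and truncating at $N$, retaining the boundary atom $N^{-p}\delta_{\diag(N,N)}$ that Lemma~\ref{l:biconvexity} produces. Both ray integrals then carry the factor $\int_1^N t^{-1}\,dt=\log N$, the boundary atom contributes $O(1)$, and the auxiliary Diracs wash out as $N\to\infty$. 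Your device of sending $k\searrow 1-\tfrac{2}{p}$ achieves the same effect but is less direct and still needs truncation to be rigorous.

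The genuine gap is your treatment of $p>2$. If your truncation $\mu_{k_*,T}$ omits the boundary atom, it is neither a probability with barycenter $\diag(1,1)$ nor known to be a laminate. If it includes the boundary atom $T^{-p_{k_*}}\delta_{\diag(T,T)}$ (and any staircase realization of Lemma~\ref{l:biconvexity} will leave precisely such a remainder on the diagonal), then since $p-p_{k_*}=p(p-2)/(p-1)>0$ one has $\phi_2(\diag(T,T))T^{-p_{k_*}}=2^pT^{p-p_{k_*}}$, which is of the \emph{same} order as the ray integrals $\sim T^{p-p_{k_*}}$. The boundary contribution does not become negligible, and the limiting ratio is \emph{not} $((p-1)^2+\tau^2)^{p/2}$. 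So the claimed ``the ratio is already independent of $T$'' is either false (with boundary term) or irrelevant (without it, you have no laminate).

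The paper's fix is a one-line observation you missed: if $f$ is rank-one convex on $\R^{2\times 2}_{sym}$ then $(x,y)\mapsto f(\diag(-x,y))$ is also biconvex. Applying Lemma~\ref{l:biconvexity} to this reflected function, still with $k=1-\tfrac{2}{p}\in(0,1)$, yields a compactly supported laminate $\tilde\mu_N$ concentrated on $\{\diag(-kt,t),\diag(-t,kt):1\le t\le N\}$ together with $N^{-p}\delta_{\diag(-N,N)}$, with barycenter $\diag(-1,1)$. Along the reflected ray the on-ray ratio is $\bigl((\tfrac{1+k}{1-k})^2+\tau^2\bigr)^{p/2}=((p-1)^2+\tau^2)^{p/2}$, and now (since $p_k=p$ again) the ray integrals carry the divergent $\log N$ while the boundary atom contributes $O(1)$. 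One then replaces the $\delta_{\diag(-1,1)}$ atom in Example~\ref{ex:2lam} (rather than the $\delta_{\diag(1,1)}$ atom) to center at $0$.
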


\bigskip

\subsection{\bf Proof of Theorem \ref{t:laminate}}\label{s:laminate}\hfill 

A function $f(x,y)$ of two variables is said to be biconvex if the functions $x\mapsto f(x,y)$ and $y\mapsto f(x,y)$ are convex for all $x,y$. We start with the following inequality for biconvex functions in the plane.

\begin{lemma}\label{l:biconvexity}
Let $k\in (-1,1)$ and $N>1$.
For every $f\in C(\R^2)$ biconvex we have
\begin{equation}\label{e:main}
f(1,1)\leq \frac{1}{1-k}\int_1^N\bigl(f(kt,t)+f(t,kt)\bigr)t^{-\tfrac{2}{1-k}}\frac{dt}{t}\,+f(N,N)N^{-\tfrac{2}{1-k}}.
\end{equation}
\end{lemma}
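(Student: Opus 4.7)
The plan is to derive the inequality by iterating a one-step biconvex splitting on a logarithmic grid $t_n=r^n$ ($n=0,1,\dots,K$) with $r^K=N$, and then letting $r\to 1^+$. At each scale the splitting peels off contributions on the two rays $\{(kt,t)\}$ and $\{(t,kt)\}$ at the current and next grid levels, plus a residual at the next diagonal point $(rt,rt)$, which is then folded into the next step. Telescoping assembles the ray-contributions into a geometric Riemann sum that converges to the claimed integral, while the surviving residual produces the boundary term $N^{-2/(1-k)}f(N,N)$.

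\emph{One-step inequality.} Fix $r>1$ with $rk\le 1$ (automatic when $k\le 0$, and valid for $r$ close to $1$ when $0<k<1$). The identities
$$
t=\tfrac{r-1}{r-k}(kt)+\tfrac{1-k}{r-k}(rt)=\tfrac{r-1}{r(1-k)}(krt)+\tfrac{1-rk}{r(1-k)}(rt)
$$
together with convexity of $f$ first in the $x$-variable and then in the $y$-variable give
$$
f(t,t)\le\tfrac{r-1}{r-k}f(kt,t)+\tfrac{1-k}{r-k}f(rt,t),\qquad f(rt,t)\le\tfrac{r-1}{r(1-k)}f(rt,krt)+\tfrac{1-rk}{r(1-k)}f(rt,rt).
$$
Chaining these produces
$$
f(t,t)\le p_1 f(kt,t)+p_2 f(rt,krt)+p_3 f(rt,rt),\quad p_1=\tfrac{r-1}{r-k},\ p_2=\tfrac{r-1}{r(r-k)},\ p_3=\tfrac{1-rk}{r(r-k)}.
$$

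\emph{Telescoping.} Apply the one-step inequality at $t=t_n:=r^n$, rearrange to
$$
f(t_n,t_n)-p_3 f(t_{n+1},t_{n+1})\le p_1 f(kt_n,t_n)+p_2 f(t_{n+1},kt_{n+1}),
$$
multiply by $p_3^{n}$ and sum from $n=0$ to $K-1$ (with $K$ chosen so that $r^K=N$). The left side collapses to $f(1,1)-p_3^K f(N,N)$, yielding
$$
f(1,1)\le p_3^K f(N,N)+p_1\sum_{n=0}^{K-1}p_3^n f(kr^n,r^n)+p_2\sum_{n=0}^{K-1}p_3^n f(r^{n+1},kr^{n+1}).
$$

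\emph{Continuum limit.} Let $K\to\infty$ with $N=r^K$ fixed, so $r\to 1^+$. A short Taylor expansion at $r=1$ gives $\log p_3=-\tfrac{2}{1-k}(r-1)+O((r-1)^2)$, hence $p_3^n=t_n^{-2/(1-k)}(1+o(1))$ uniformly for $t_n\in[1,N]$, and in particular $p_3^K\to N^{-2/(1-k)}$. Both $p_1$ and $p_2$ are asymptotic to $\tfrac{r-1}{1-k}$, and using $t_{n+1}-t_n=t_n(r-1)$ the logarithmic Riemann sum
$$
(r-1)\sum_{n=0}^{K-1}t_n^{-2/(1-k)}f(kt_n,t_n)\longrightarrow \int_1^N f(kt,t)\,t^{-\tfrac{2}{1-k}}\tfrac{dt}{t},
$$
with the analogous statement for the $f(t,kt)$ sum after the index shift $m=n+1$ (which produces an identical integrand). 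Summing the two contributions reproduces the prefactor $\tfrac{1}{1-k}$ and gives the claimed inequality. The principal obstacle lies in this passage to the limit: one must verify uniform convergence of the Riemann sums and control the multiplicative error in $p_3^n\sim t_n^{-2/(1-k)}$ over $n$ as large as $K\sim(\log N)/(r-1)$. Both are routine, since biconvexity forces $f$ to be continuous and the integrand is continuous on the compact interval $[1,N]$.
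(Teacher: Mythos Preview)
Your proof is correct and rests on the same biconvex splitting as the paper's: at each scale one peels off a contribution on each of the rays $\{(kt,t)\}$ and $\{(t,kt)\}$ and passes the residual forward along the diagonal. The paper executes this infinitesimally---it sends the step size to zero first, obtaining the differential inequality
\[
-\frac{d}{dt}f(t,t)+\frac{2}{t(1-k)}f(t,t)\le \frac{1}{t(1-k)}\bigl(f(kt,t)+f(t,kt)\bigr),
\]
and then multiplies by the integrating factor $t^{-2/(1-k)}$ and integrates. You instead keep the step size finite, telescope on a geometric grid, and pass to the limit at the end via Riemann sums. The paper's route is shorter and cleaner once one has $f\in C^1$ (handled there by a one-line mollification), whereas your discrete argument works directly for continuous $f$ but requires the uniform error control you sketch (the $O((r-1)^2)$ term in $\log p_3$ accumulated over $K\sim (\log N)/(r-1)$ steps, and the Riemann-sum convergence on $[1,N]$). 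Both are routine, and the two arguments are really the same computation viewed before and after the limit $\eps\to 0$.
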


\begin{proof}
By a standard regularization argument it suffices to show the inequality for 
$f\in C^1(\R^2)$ biconvex.
The biconvexity implies the following elementary inequalities:
\begin{eqnarray}
f(t,t)&\leq& \lambda^{\eps} f(t,t+\eps)+(1-\lambda^{\eps})f(t,kt),\label{e:cvx1}\\
f(t,t+\eps)&\leq&	\mu^{\eps} f(t+\eps,t+\eps) + (1-\mu^{\eps})f(k(t+\eps),t+\eps),\label{e:cvx2}
\end{eqnarray} 
where
\begin{eqnarray*}
\lambda^{\eps}&=&1-\frac{\eps}{t(1-k)+\eps}\\
\mu^{\eps}&=&1-\frac{\eps}{t(1-k)+\eps(1-k)}.
\end{eqnarray*}
Combining \eqref{e:cvx1} and \eqref{e:cvx2} and observing that $\lambda^\eps,\mu^\eps=1-\frac{\eps}{t(1-k)}+o(\eps)$,
we obtain
\begin{equation}\label{diffquotient}
\begin{split}
\frac{f\bigl(t+\eps,t+\eps\bigr)-f(t,t)}{\eps}&-\frac{2}{t(1-k)}f(t+\eps,t+\eps)\geq\\
 &-\frac{1}{t(1-k)}\biggl(f(k(t+\eps),t+\eps)+f(t,kt)\biggr)+o(1).
\end{split}
\end{equation}
Letting $\eps\to 0+$ this yields
$$
-\frac{\partial}{\partial t}f(t,t)+\frac{2}{t(1-k)}f(t,t)\leq \frac{1}{t(1-k)}\bigl(f(kt,t)+f(t,kt)\bigr)
$$
Multiplying both sides by $t^{-\tfrac{2}{1-k}}$ and integrating, we obtain \eqref{e:main} as required.
\end{proof}
The method of obtaining continuous laminates by integrating a differential inequality as above is due to Kirchheim, and appeared first in the context of separate convexity in $\R^3$ in \cite{KMS}. 

Next, for $1<p<\infty$ let 
$$
k=1-\frac{2}{p},
$$
so that $p=\frac{2}{1-k}$. We need to differentiate between the cases $1<p\leq 2$ and $2<p<\infty$.

\bigskip

\noindent{\bf The case $1<p\leq 2$.}

Let $\mu_{N}\in\Pk(\R^{2\times 2})$ be defined by the RHS of \eqref{e:main}, more precisely
$$
\int f\,d\mu_{N}:=\frac{1}{1-k}\int_1^N\bigl[f\bigl(\diag(kt,t)\bigr)+f\bigl(\diag(t,kt)\bigr)\bigr]t^{-p}\frac{dt}{t}\,+\frac{f\bigl(\diag(N,N)\bigr)}{N^{p}}
$$
for $f\in C(\R^{2\times 2})$. Then $\mu_{N}$ is a probability with barycenter $\overline{\mu}_{N}=\diag(1,1)$. Moreover, observe that if $f$ is rank-one convex, then $(x,y)\mapsto f(\diag(x,y))$ is biconvex. Therefore, using Lemma \ref{l:biconvexity} we see that $\mu_N$ is a laminate. Then, combining with the measure from Example \ref{ex:2lam} (c.f.~splitting procedure from Definition \ref{d:prelam}) we conclude that the measure
$$
\nu_{N}:=\frac{1}{4}\mu_{N}+\frac{1}{4}\delta_{\diag(-1,1)}+\frac{1}{2}\delta_{\diag(0,-1)}
$$
is a laminate with barycenter $\overline{\nu}_{N}=0$. We claim that this sequence of laminates has the desired properties for Theorem \ref{t:laminate}. To this end we calculate
\begin{align*}
\int\phi_1\,d\mu_N&=p|(1-k)^2+\tau^2(1+k)^2|^{p/2}\log N+2^p, \\
\int\phi_2\,d\mu_N&=p(1+k)^p\log N.
\end{align*}
In particular we see that as $N\to\infty$
\begin{equation*}
\begin{split}
\frac{\int\phi_1\,d\nu_N}{\int\phi_2\,d\nu_N}\quad \to\quad &\frac{|(1-k)^2+\tau^2(1+k)^2|^{p/2}}{(1+k)^p}\\
&=\biggl[\biggl(\frac{1-k}{1+k}\biggr)^2+\tau^2\biggr]^p\\
&=\biggl[\biggl(\frac{1}{p-1}\biggr)^2+\tau^2\biggr]^p=[(p^*-1)^2+\tau^2]^p.
\end{split}
\end{equation*}

\bigskip

\noindent{\bf The case $2<p<\infty$.}

Let $\tilde\mu_{N}\in\Pk(\R^{2\times 2})$ be defined by
$$
\int f\,d\tilde\mu_{N}:=\frac{1}{1-k}\int_1^N\bigl[f\bigl(\diag(-kt,t)\bigr)+f\bigl(\diag(-t,kt)\bigr)\bigr]t^{-p}\frac{dt}{t}\,+\frac{f\bigl(\diag(-N,N)\bigr)}{N^{p}}
$$
for $f\in C(\R^{2\times 2})$. Then $\tilde\mu_{N}$ is a probability with barycenter $\overline{\tilde\mu}_{N}=\diag(-1,1)$. Moreover, as before, we see that if $f$ is rank-one convex, then $(x,y)\mapsto f(\diag(-x,y))$ is biconvex. Therefore $\tilde\mu_N$ is again a laminate, hence also
$$
\tilde\nu_{N}:=\frac{1}{4}\tilde\mu_{N}+\frac{1}{4}\delta_{\diag(1,1)}+\frac{1}{2}\delta_{\diag(0,-1)}
$$
is a laminate with barycenter $0$. Repeating the calculations above, we obtain
\begin{equation*}
\begin{split}
\frac{\int\phi_1\,d\tilde\nu_N}{\int\phi_2\,d\tilde\nu_N}\quad \underset{N\to\infty}{\longrightarrow}\quad &\frac{|(1+k)^2+\tau^2(1-k)^2|^{p/2}}{(1-k)^p}\\
&=\biggl[\biggl(\frac{1+k}{1-k}\biggr)^2+\tau^2\biggr]^p\\
&=[(p-1)^2+\tau^2]^p=[(p^*-1)^2+\tau^2]^p.
\end{split}
\end{equation*}

\section{Comparison with Burkholder functions}\hfill

Now we will discuss the ``Burkholder-type" functions introduced in \cite{BJV1}, \cite{BJV2} and \cite{BJV3}.  Let $p^*-1 := \max\left\{p-1, \frac{1}{p-1}\right\}$ and $x := (x_1, x_2)$ denote a point in $\R^2.$  We will denote the coordinates $y := (\yone, \ytwo) \in \R^2,$ as the rotation of $x$ by $\frac{\pi}{4},$ that is 
\beno
\yone = \frac{x_1+x_2}{2},\,\, \ytwo = \frac{x_1-x_2}{2}.
\eeno
\begin{defi}
We say that a function $f := f(x_1, x_2)$ is \textit{zigzag concave} if it is bi-concave in the $y-variables.$
\end{defi}
\begin{defi}\label{definitionUv}
Let $v(x_1,x_2) := v_{p, \tau}(x_1, x_2) = (\tau^2|x_1|^2+|x_2|^2)^{\frac p2}-((p^*-1)^2+\tau^2)^{\frac p2}|x_1|^p$ and $u(x_1,x_2) := u_{p, \tau}(x_1, x_2) = \alpha_p(|x_1|+|x_2|)^{p-1}[|x_2|-(p^*-1)|x_1|],$
where $\alpha_p =  p(1-\frac{1}{p*})^{p-1}\left(1+\frac{\tau^2}{(p^*-1)^2}\right)^{\frac{p-2}{2}}.$  For $1 < p <2,$ we define
\begin{displaymath}
  U(x_1, x_2) :=  U_{p,\tau}(x_1, x_2) = \left\{
     \begin{array}{lr}
       v(x_1, x_2) & : |x_2| \geq (p^*-1)|x_1|\,\,\\
       u(x_1, x_2) & : |x_2| \leq (p^*-1)|x_1|,
     \end{array}
   \right.
\end{displaymath}
and for $2<p<\infty,$
\begin{displaymath}
  U(x_1, x_2) :=  U_{p,\tau}(x_1, x_2) = \left\{
     \begin{array}{lr}
       u(x_1, x_2) & : |x_2| \geq (p^*-1)|x_1|\,\,\\
       v(x_1, x_2) & : |x_2| \leq (p^*-1)|x_1|.
     \end{array}
   \right.
\end{displaymath}
\end{defi}

\begin{defi}  
\label{cM}
Denote  $c_{\mathcal{M}} := \inf\left\{c: v_c \text{ has a zigzag concave majorant and }U\text{ is such that }U(0,0) = 0\right\}.$
\end{defi}
Now we will see the key relationship between the ``Burkholder--type" functions $U$ and $v.$
\begin{theorem}\label{leastbiconcave}
1.  $c_{\mathcal{M}} \geq ((p^*-1)^2+\tau^2)^{\frac p2},$ for all $1 < p < \infty$ and all $\tau \in \R.$\\

2.  $c_{\mathcal{M}} = ((p^*-1)^2+\tau^2)^{\frac p2},$ for $1<p<2$ and $\tau^2 \leq p^*-1$ or $2 \leq p < \infty$ and $\tau \in \R.$\\

3.  If $1 < p < 2$ and $\tau$ is sufficiently large then $c_{\mathcal{M}} > ((p^*-1)^2+\tau^2)^{\frac p2}.$
\end{theorem}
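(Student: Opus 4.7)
The plan is to establish the three parts of Theorem \ref{leastbiconcave} as follows.

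For part 1 (the lower bound $c_\mathcal{M} \geq ((p^*-1)^2+\tau^2)^{p/2}$), I would use the laminates $\nu_N$ constructed in the proof of Theorem \ref{t:laminate}. The key observation is that under the identifications $y_1 = A_{11}$ and $y_2 = A_{22}$ (consistent with $x_1 = A_{11}+A_{22}$ and $x_2 = A_{11}-A_{22}$ under $A=\diag(A_{11},A_{22})$), zigzag concavity of $U(x_1,x_2)$ coincides with separate concavity of $\tilde U(y_1,y_2) := U(y_1+y_2,y_1-y_2)$ in the diagonal entries of a symmetric matrix. Inspecting the construction, each $\nu_N$ is realized as a limit of prelaminates whose splittings are all along the rank-one directions $\diag(1,0)$ and $\diag(0,1)$ internal to the diagonal subspace. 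Iterated Jensen's inequality for separately concave $\tilde U$ at every splitting step yields $\int U\,d\nu_N \leq U(0,0) = 0$ for any zigzag concave majorant $U$ with $U(0,0)=0$. Since $U \geq v_c = \phi_1 - c\phi_2$, this gives $c \geq \int\phi_1\,d\nu_N\big/\int\phi_2\,d\nu_N$; letting $N\to\infty$ with the ratio limit from Theorem \ref{t:laminate} delivers the lower bound.

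For part 2 (matching upper bound in the stated ranges), I would verify directly that the explicit $U$ from Definition \ref{definitionUv}, with $c = c_0 := ((p^*-1)^2+\tau^2)^{p/2}$, is a valid zigzag concave majorant with $U(0,0) = 0$. This amounts to three checks: (a) $U(0,0)=0$, immediate from the formulas; (b) $U \geq v_{c_0}$ pointwise, which is automatic in the ``$v$-region'' of the piecewise definition and reduces to an explicit algebraic inequality in the ``$u$-region'' --- this is precisely where the restriction $\tau^2 \leq p^*-1$ for $1<p<2$ is used; and (c) biconcavity of $\tilde U$ in $(y_1,y_2)$. For (c), within each region the verification reduces to one-variable Hessian computations (the multiplier $\alpha_p$ is calibrated exactly to make $u$ biconcave in $y$), and across the gluing curve $|x_2|=(p^*-1)|x_1|$ the choice $c=c_0$ ensures that $U$ and its gradient match continuously, so that biconcavity is preserved across the interface.

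For part 3 (strict inequality for $1<p<2$ and $\tau$ large), I would first isolate a concrete obstruction on the $x_1$-axis and then translate it into a strict lower bound via a refined laminate. Along $x_2=0$, direct computation gives $u(x_1,0) = -\alpha_p(p^*-1)|x_1|^p$ and $v_{c_0}(x_1,0) = (\tau^p - c_0)|x_1|^p$. Expanding for large $\tau$ (using $p<2$),
\[
c_0 - \tau^p \;\sim\; \tfrac{p}{2}(p^*-1)^2\,\tau^{p-2}, \qquad \alpha_p(p^*-1) \;\sim\; p\bigl(1-\tfrac{1}{p^*}\bigr)^{p-1}(p^*-1)^{3-p}\,\tau^{p-2},
\]
and for $\tau$ sufficiently large the former strictly exceeds the latter, so $u < v_{c_0}$ on the $x_1$-axis. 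By the duality between biconcave majorants vanishing at the origin and diagonal laminates of barycenter $0$, this local failure yields a diagonal laminate $\nu$ with $\int v_{c_0}\,d\nu > 0$; concretely, such $\nu$ is obtained by modifying the construction of Theorem \ref{t:laminate}, deviating from the critical scaling $k = 1 - 2/p$ so as to exploit the failure region near $\{x_2 = 0\}$. A compactness argument, showing that biconcave majorants with $U(0,0)=0$ form a closed family in the parameter $c$, then upgrades the strict positivity of $\int v_{c_0}\,d\nu$ into the strict inequality $c_\mathcal{M} > c_0$. The main obstacle is precisely this last step: explicitly producing the refined laminate $\nu$ and carrying out the duality and compactness arguments that convert the pointwise failure of the candidate $U$ into a statement about all biconcave majorants, rather than just the extremal one from Definition \ref{definitionUv}.
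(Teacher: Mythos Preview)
Your plan for Part~1 is correct and is in fact cleaner than the paper's own argument. The paper argues by contradiction through the operator norm: if some $\widetilde c<((p^*-1)^2+\tau^2)^{p/2}$ admitted a zigzag concave majorant, then the upper-bound machinery of Section~\ref{s:upperbound} would force the operator norm below $((p^*-1)^2+\tau^2)^{1/2}$, contradicting the lower bound from Theorem~\ref{t:laminate}. Your direct duality argument (zigzag concave majorant $U$ with $U(0,0)=0$ $\Rightarrow$ $\int v_c\,d\nu_N\le\int U\,d\nu_N\le U(0,0)=0$) bypasses the operator entirely; this is exactly the content of the paper's later Theorem~\ref{th:c}, and it is what one should do.

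For Part~2 your outline is right, but you have misplaced the role of the hypothesis $\tau^2\le p^*-1$. The paper (citing \cite{BJV1,BJV3}, and stated explicitly in the proof of Part~3) asserts that for $1<p<2$ and large $\tau$ the function $U$ of Definition~\ref{definitionUv} \emph{remains a majorant} of $v_{c_0}$ but \emph{ceases to be zigzag concave}. So the restriction is needed for your check~(c), not~(b). This matters for Part~3.

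Part~3 has a genuine gap. First, a computational slip: your own asymptotics give $c_0-\tau^p\sim \tfrac{p}{2}(p^*-1)^2\tau^{p-2}$ and $\alpha_p(p^*-1)\sim p(2-p)^{p-1}(p^*-1)^{3-p}\tau^{p-2}$, and for $1<p<2$ one checks the \emph{latter} exceeds the former (the ratio equals $\tfrac{1}{2}(p-1)^{1-p}>1$), so your stated comparison is reversed; the conclusion $u<v_{c_0}$ on $\{x_2=0\}$ happens to survive, but for the opposite reason. More seriously, this pointwise failure only disqualifies the particular candidate $U$ from Definition~\ref{definitionUv}; it says nothing about other zigzag concave majorants. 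The duality you invoke runs between the \emph{class} of biconcave functions and the class of diagonal laminates, not between a single function and laminates, so the failure of one extremal $U$ does not hand you a laminate $\nu$ with $\int v_{c_0}\,d\nu>0$. You correctly flag this as the main obstacle, but without an explicit construction of such a $\nu$ (or an independent argument, as in \cite{BJV3}, Remark~27, which the paper simply cites) the proof of Part~3 is incomplete.
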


\begin{proof}
$1)$  By way of contradiction, suppose that there is such a $\widetilde{c} \in [0,((p^*-1)^2+\tau^2)^{\frac p2})$ that $v_{\widetilde{c}}$ has a zigzag concave majorant.   Then following the upper bound estimate in Section \ref{s:upperbound}, Theorem \ref{mainupperboundestimate} would have $\widetilde{c}$ as the upper bound of our quadratic perturbation.  However, this is impossible because of Theorem \ref{t:laminate}.  

$2)$  In \cite{BJV1} and \cite{BJV3} it is shown for $1<p<2$ and $\tau^2 \leq p^*-1$ or $2 \leq p < \infty$ and $\tau \in \R$ that for $c = ((p^*-1)^2+\tau^2)^{\frac p2}, v_c$ has a zigzag concave majorant.  This proves that $c_{\mathcal{M}} \leq ((p^*-1)^2+\tau^2)^{\frac p2}.$  Combining with $1)$ we get equality.

$3)$  For $1<p<2$ and $\tau \in \R$ sufficiently large, $U$ is no longer zigzag concave, while still being a majorant of  $v_c,$ with $c = ((p^*-1)^2+\tau^2)^{\frac p2}.$  We know that if $\tau$ is sufficiently large and $1<p<2,$ the least $c_0$ for which $v_{c_0}$ has a zigzag concave majorant must satisfy $c_0 > ((p^*-1)^2+\tau^2)^{\frac p2}.$  See \cite{BJV3}, Remark 27.  The condition $\tau^2 \leq p^*-1$ is a sufficient condition for $U$ to be the zigzag concave majorant, but not necessary. \qedhere
\end{proof}

\subsection{{\bf Analyzing the ``Burkholder-type" functions $U$ and $v$}} \hfill

We will use the $y-$coordinates, unless otherwise stated, from this point on.  In the $y-$coordinates, 
\beno
\widetilde{U}(y_1, y_2) := U(y_1 -y_2, y_1 + y_2) = U(x_1, x_2), 
\eeno
and it takes the following form.  For $2 \leq p < \infty,$
\beno
  \widetilde{U}(y) = \left\{
     \begin{array}{lr}
       \widetilde{u}(y),\,\, \frac{p-2}{p}\yone \leq \ytwo \leq \frac{p}{p-2}\yone \\
       \widetilde{v}(y),\,\, \text{otherwise},
     \end{array}
   \right.
\eeno
and for $1 < p < 2,$ 
\be\label{Uforpless2}
   \widetilde{U}(y) = \left\{
     \begin{array}{lr}
       \widetilde{v}(y),\,\, \frac{2-p}{p}\yone \leq \ytwo \leq \frac{p}{2-p}\yone \\
       \widetilde{u}(y),\,\, \text{otherwise},
     \end{array}
   \right.
\ee
where \beno
\widetilde{u}(y_1, y_2) := u(y_1 -y_2, y_1 + y_2) = u(x_1, x_2), \widetilde{v}(y_1, y_2) := v(y_1 -y_2, y_1 + y_2) = v(x_1, x_2).  
\eeno

We will fix $2 \leq p < \infty,$ as the dual range of $p$ values is handled similarly.  Denote 
\be \label{definedconstants}
k:= \frac{p}{p-2},
\ee
then $p = \frac{2k}{k-1}$ and $p -1 = \frac{k+1}{k-1}.$  Also denote
\be \label{Lk}
L_k := \{\ytwo = k\yone\} \,\, \text{and} \,\, L_{\frac 1k} := \left\{\ytwo = \frac 1k\yone\right\}.
\ee
Observe that in the cone
\beno
C_1 = \{\yone \leq \ytwo \leq k \yone\},
\eeno
$\widetilde{U}$ is linear if we fix $\ytwo.$  Also, $\widetilde{U}$ is linear if we fix $\yone$ in the cone
\beno
C_2 = \{\frac 1k \yone \leq \ytwo \leq \yone\}.
\eeno
Consequently, $\widetilde{U}$ is almost linear in the ``T-shape" graph, which we will denote as $T$, with vertices 
\beno
\left\{(\frac 1k (\yone + h), \yone +h), (\yone, \yone + h), (\yone + h, \yone+h), (\yone, \frac 1k \yone)\right\}.
\eeno
The only portion where $\widetilde{U}$ is not linear is on the segment from $(\yone, \yone)$ to $(\yone, \yone+h).$   It is very small in comparison with the graph $T.$

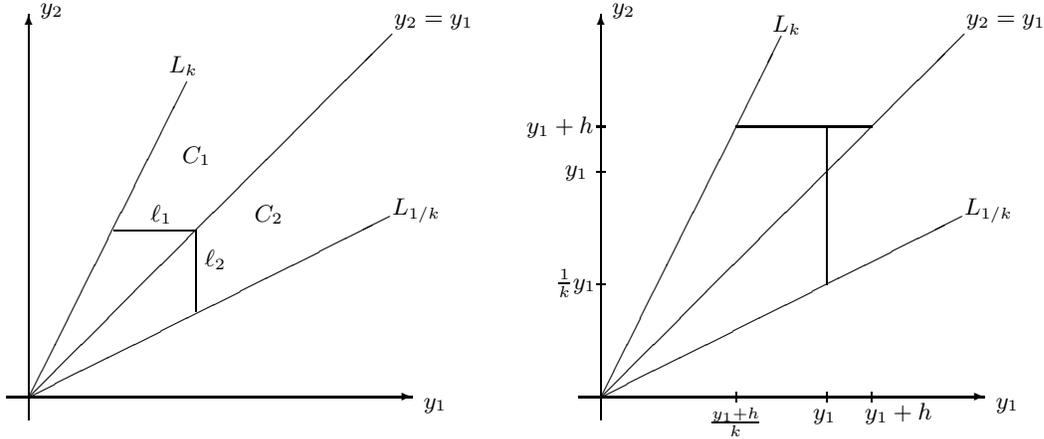
\begin{figure}[h]
  \centering
 \setlength{\unitlength}{0.03cm}
\subfigure[$\widetilde{U}$ is linear moving along lines such as $\ell_1$ and $\ell_2.$]{
\begin{picture}(200,200)(0,0)
\thinlines
\put(20,10){\vector(0,1){180}} 
\put(10,20){\vector(1,0){180}} 

\put(20,20){\line(1,1){161}} 

\put(20,20){\line(2,1){160}} 
\put(20,20){\line(1,2){70}} 

\put(58,94){\line(1,0){36}} 
\put(94,94){\line(0,-1){36}} 

\put(182,185){\footnotesize $y_2=y_1$}
\put(181,101){\footnotesize $L_{1/k}$}
\put(120,98){\footnotesize $C_2$}
\put(98,78){\footnotesize $\ell_2$}
\put(82,164){\footnotesize $L_k$}
\put(74,98){\footnotesize $\ell_1$}
\put(88,124){\footnotesize $C_1$}
\put(25,190){\footnotesize $y_2$}
\put(195,15){\footnotesize $y_1$}
\end{picture}}                
\subfigure[The ``T-shaped" graph $T.$]
{\setlength{\unitlength}{0.03cm}
\begin{picture}(245,200)(-45,0)
\thinlines
\put(20,10){\vector(0,1){180}} 
\put(10,20){\vector(1,0){180}} 

\put(20,20){\line(1,1){161}} 

\put(20,20){\line(2,1){160}} 
\put(20,20){\line(1,2){80}} 

\put(140,140){\line(-1,0){60}} 
\put(120,140){\line(0,-1){70}} 

\put(80,22){\line(0,-1){4}} 
\put(68,6){\footnotesize  $\frac{y_1+h}{k}$}
\put(120,22){\line(0,-1){4}} 
\put(114,10){\footnotesize $y_1$}
\put(140,22){\line(0,-1){4}} 
\put(137,10){\footnotesize $y_1+h$}
\put(18,70){\line(1,0){4}} 
\put(0,68){\footnotesize $\frac 1k y_1$}
\put(18,120){\line(1,0){4}} 
\put(4,118){\footnotesize $y_1$}
\put(18,140){\line(1,0){4}} 
\put(-13,137){\footnotesize $y_1+h$}

\put(182,185){\footnotesize $y_2=y_1$}

\put(181,101){\footnotesize $L_{1/k}$}
\put(96,182){\footnotesize $L_k$}

\put(25,190){\footnotesize $y_2$}
\put(195,15){\footnotesize $y_1$}
\end{picture}}
\caption{Splitting between $u$ and $\widetilde{v}$ in $y_1 \times y_2-$plane.}
\label{T-shapedregion}
\end{figure}

By Theorem \ref{leastbiconcave}, $\widetilde{U} \geq \widetilde{v}$ in $\R^2.$  But, on $L_k$ and $L_{\frac 1k}, \widetilde{U} = \widetilde{v}.$  Also, observe that $\widetilde{U}(0,0) = \widetilde{v}(0,0)$ and $\widetilde{v} \geq 0$ on $C_1$ and $C_2$ (this is easy to see in $x-$coordinates).  We will summarize these important facts, so that we can later refer to them.

\begin{prop} \label{Uproperties}
(1)  $\widetilde{v} \geq 0$ on $C_1$ and $C_2$\\
(2)  $\widetilde{v}(0,0) = \widetilde{U}(0,0)$\\
(3)  $\widetilde{U} = \widetilde{v}$ on $L_k$ and $L_{\frac 1k}.$\\
(4)  $\widetilde{U}$ is nearly linear on $T.$
\end{prop}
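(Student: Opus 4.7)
My plan is to handle the four items separately, with (1), (2), (3) reducing to direct computations and (4) being where the substance lies. Throughout I would work in the range $p\geq 2$ (the case $1<p<2$ being dual), so that $p^*-1 = p-1$.

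For (1), I would rewrite $v(x_1,x_2) = |x_1|^p\bigl[(\tau^2 + (x_2/x_1)^2)^{p/2} - ((p^*-1)^2+\tau^2)^{p/2}\bigr]$, from which $v \geq 0 \Leftrightarrow |x_2| \geq (p^*-1)|x_1|$ is immediate. I then need to check that $C_1$ and $C_2$ sit inside this region. Under $x_1 = y_1 - y_2$, $x_2 = y_1 + y_2$ and $k = p/(p-2)$, the algebraic identity $(k+1)/(k-1) = p-1 = p^*-1$ shows that the bounding rays $L_k, L_{1/k}$ map precisely onto the rays $|x_2| = (p^*-1)|x_1|$, while $C_1, C_2$ (sitting between $L_k, L_{1/k}$ on either side of the diagonal $y_2 = y_1$, i.e.\ $x_1 = 0$) both lie in $|x_2| \geq (p^*-1)|x_1|$. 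Item (2) is immediate from $u(0,0) = v(0,0) = 0$. For (3), the same identity $|x_2| = (p^*-1)|x_1|$ on $L_k \cup L_{1/k}$ forces $v \equiv 0$ via the rewritten formula and $u \equiv 0$ via the factor $|x_2| - (p^*-1)|x_1|$ in its definition, so $\widetilde{U} = \widetilde{v} = 0$ there, regardless of which branch of Definition \ref{definitionUv} one reads.

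The substance is in (4). Since $\widetilde{U} = \widetilde{u}$ throughout $C_1 \cup C_2$ for $p \geq 2$, I would substitute the coordinate change into $u = \alpha_p(|x_1|+|x_2|)^{p-1}[|x_2| - (p-1)|x_1|]$ using the signs of $x_1, x_2$ in each cone. In $C_1$ (where $x_1 \leq 0$, $x_2 \geq 0$) one obtains $|x_1|+|x_2| = 2y_2$ and $|x_2| - (p-1)|x_1| = (p-2)(ky_1 - y_2)$, yielding $\widetilde{u}(y_1, y_2) = \alpha_p(p-2)(2y_2)^{p-1}(ky_1 - y_2)$, which is manifestly affine in $y_1$ with $y_2$ fixed. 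The parallel computation in $C_2$ produces $\widetilde{u}(y_1, y_2) = \alpha_p(p-2)(2y_1)^{p-1}(ky_2 - y_1)$, affine in $y_2$ with $y_1$ fixed. This already covers the horizontal top bar of $T$ (which lies in $C_1$ with $y_2 \equiv y_1+h$ fixed) and the lower portion of the vertical bar, from $(y_1, y_1/k)$ to $(y_1, y_1)$, which lies in $C_2$ with $y_1$ fixed.

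The only non-affine piece of $T$ is then the short segment from $(y_1, y_1)$ to $(y_1, y_1+h)$, sitting in $C_1$ with $y_1$ fixed but $y_2$ varying, so that $\widetilde{u}$ depends on $y_2$ through $(2y_2)^{p-1}$ and is genuinely non-linear. I expect the main obstacle to be making the clause ``nearly linear'' quantitative; my intention is to treat it as a qualitative statement, noting that this anomalous segment has length $h$ whereas the two bars of $T$ have length of order $y_1(1-1/k)$, so the deviation from affine behavior is $O(h)$ and can be absorbed in later arguments by choosing $h$ small relative to the other scales in play.
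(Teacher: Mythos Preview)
Your proposal is correct and follows exactly the approach the paper takes. In fact, the paper does not give a separate proof of this proposition at all: the four items are simply stated in the paragraphs preceding the proposition as observations (``this is easy to see in $x$-coordinates''; ``$\widetilde{U}$ is linear if we fix $y_2$ in $C_1$''; ``the only portion where $\widetilde{U}$ is not linear is on the segment from $(y_1,y_1)$ to $(y_1,y_1+h)$, which is very small in comparison with the graph $T$''), and then summarized as Proposition~\ref{Uproperties}. Your explicit substitutions---the factorization of $v$ for (1), the identity $(k+1)/(k-1)=p-1$ pinning $L_k, L_{1/k}$ to the rays $|x_2|=(p^*-1)|x_1|$ for (1) and (3), and the formulas $\widetilde{u}=\alpha_p(p-2)(2y_2)^{p-1}(ky_1-y_2)$ in $C_1$ and $\widetilde{u}=\alpha_p(p-2)(2y_1)^{p-1}(ky_2-y_1)$ in $C_2$ for (4)---are precisely the computations the paper leaves implicit, and your reading of ``nearly linear'' as a qualitative $O(h)$ statement is exactly what the paper intends.
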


\subsection{Why the laminate sequence $\nu_N$ worked in Theorem 8}\hfill

Let $\phi_1(y_1,y_2):= (|y_1-y_2|^2+\tau^2|y_1+y_2|^2)^{\frac{p}2}$, $\phi_2(y_1,y_2):= |y_1+y_2|^p$.

\begin{defi}
 Let $c_{\mathcal{L}} := \sup\left\{\frac{\int\phi_1d\nu}{\int\phi_2d\nu}:  \nu\in\mathcal{L}_0\right\}.$
\end{defi}

\begin{theorem}
\label{th:c}
$c_{\mathcal{M}} \geq c_{\mathcal{L}}.$
\end{theorem}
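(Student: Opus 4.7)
The strategy is to deduce $c_{\mathcal{M}}\geq c_{\mathcal{L}}$ by combining the upper bound estimate from Section \ref{s:upperbound} with the Hessian approximation of laminates provided by Corollary \ref{c:approxlaminate}. In other words, we funnel any given laminate through a smooth Hessian distribution, apply the Burkholder/heat-martingale machinery there, and then pass to the limit.

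Concretely, fix any $c>c_{\mathcal{M}}$, so that by definition a zigzag concave majorant $U$ of $v_c$ exists with $U(0,0)=0$. The heat martingale argument encapsulated in Theorem \ref{mainupperboundestimate} uses exactly such a $U$ as a Bellman function and yields
$$
\left\|\left((R_1^2-R_2^2)\varphi,\tau\varphi\right)\right\|_{L^p(\C,\C^2)}\leq c^{1/p}\|\varphi\|_{L^p(\C)}
$$
for every $\varphi\in\mathcal{S}(\R^2)$. Setting $\varphi=-\Delta u$ for $u\in C^\infty_c(\R^2)$, raising to the $p$-th power, and invoking the reductions leading to (\ref{pushforward}), this is equivalent to
$$
\int_{\R^2}\phi_1(D^2u)\,dx\;\leq\;c\int_{\R^2}\phi_2(D^2u)\,dx.
$$
Now for any $\nu\in\mathcal{L}_0(\R^{2\times 2}_{sym})$, Corollary \ref{c:approxlaminate} delivers $u_j\in C^\infty_c(B_1(0))$ with uniformly bounded second derivatives such that $\int_{B_1(0)}\phi(D^2u_j)\,dx\to\int\phi\,d\nu$ for every continuous $\phi$. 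Applying this with $\phi=\phi_1$ and $\phi=\phi_2$ (both continuous on $\R^{2\times 2}_{sym}$) and passing to the limit in the inequality above yields $\int\phi_1\,d\nu\leq c\int\phi_2\,d\nu$. Taking the supremum over laminates produces $c_{\mathcal{L}}\leq c$, and letting $c\downarrow c_{\mathcal{M}}$ gives the claim.

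The main obstacle is not in the short deduction above, which is a routine combination of Theorem \ref{mainupperboundestimate} with Corollary \ref{c:approxlaminate}, but in the assertion that the heat-martingale upper bound in Section \ref{s:upperbound} really does apply for every $c>c_{\mathcal{M}}$ and not only for the specific value $c=((p^*-1)^2+\tau^2)^{p/2}$ of direct interest. Inspection of that argument shows it uses only the existence of a zigzag concave majorant of $v_c$ normalized by $U(0,0)=0$, which is exactly the defining property of $c_{\mathcal{M}}$, so the needed generalization is automatic. One should also note the tacit but harmless technical points that $\phi_1,\phi_2$ are continuous (being compositions of absolute values and polynomials) and that, since $\operatorname{supp}u_j\subset B_1(0)$, integration over $B_1(0)$ and over $\R^2$ coincide, allowing the upper bound to be inserted inside the limit.
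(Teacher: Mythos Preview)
Your argument is correct but takes a substantially longer route than the paper. The paper's proof is a three-line duality argument that never leaves the world of laminates and majorants: pick $\nu\in\mathcal{L}_0$ with $\int\phi_1\,d\nu>(c_{\mathcal{L}}-\varepsilon)\int\phi_2\,d\nu$; if $\phi_1-(c_{\mathcal{L}}-\varepsilon)\phi_2$ admitted a biconcave majorant $U$ with $U(0,0)=0$, then directly from the defining inequality \eqref{e:deflam} for laminates one gets
\[
0<\int(\phi_1-(c_{\mathcal{L}}-\varepsilon)\phi_2)\,d\nu\leq\int U\,d\nu\leq U(\overline\nu)=U(0,0)=0,
\]
a contradiction. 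Hence $c_{\mathcal{L}}-\varepsilon\leq c_{\mathcal{M}}$ for every $\varepsilon>0$.

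Your route instead pushes the laminate through Corollary~\ref{c:approxlaminate} to a sequence of Hessians of smooth functions, invokes the entire heat-martingale machinery of Section~\ref{s:upperbound} to bound the resulting Hessian integrals, and then passes to the limit. This works, and you correctly observe that the upper-bound argument of Section~\ref{s:upperbound} depends only on the existence of a zigzag concave majorant of $v_c$ with value $0$ at the origin (the paper itself uses exactly this in the proof of Theorem~\ref{leastbiconcave}, part 1). But it imports It\^o's formula, conditional expectations, and the Riesz-transform representation into what is, at bottom, the Jensen inequality built into Definition~\ref{d:laminate}. The paper's direct argument makes that duality explicit: laminates are precisely the measures for which Jensen holds against rank-one convex test functions, and zigzag concave majorants with $U(0,0)=0$ are precisely the witnesses that force $\int(\phi_1-c\phi_2)\,d\nu\le 0$.
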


\begin{proof}
By the definition of $c_{\mathcal{L}},$ there exists a laminate $\nu\in\mathcal{L}$ with barycenter $0$, such that $\frac{\int\phi_1 d\nu}{\int\phi_2 d\nu} > c_{\mathcal{L}} - \varepsilon.$  This is equivalent to 
\be\label{t:lowerboundcontradiction}
\int[\phi_1 - (c_{\mathcal{L}}-\varepsilon)\phi_2]d\nu > 0.
\ee
We will now show that $\phi_1 - (c_{\mathcal{L}}-\varepsilon)\phi_2$ does not have a  biconcave majorant. By changing the variables back to $x_1,x_2$ we see that this means that $v_{c_{\mathcal{L}}-\varepsilon}(x_1,x_2)$ would not have a zigzag concave majorant, thus proving that $c_{\mathcal{L}}-\varepsilon\le c_{\mathcal{M}}$.  By way of contradiction, suppose that $b := c_\LL - \e,$ and that $\phi_1 - b\phi_2 \leq U,$ which is biconcave.  Then by (\ref{t:lowerboundcontradiction}), 
\beno
0 < \int(\phi_1 - b \phi_2)d\nu \leq \int Ud\nu \leq U(\overline{\nu}) = U(0, 0) = 0.
\eeno
This gives a contradiction and we are finished with the proof.\qedhere
\end{proof}

\begin{defi}
We denote $(p, \tau) \in \mathcal{T},$ if $1<p<2$ and $\tau^2 \leq p^*-1$ or $2 \leq p < \infty$ and $\tau \in \R.$ 
\end{defi}

Let us compose $v_c(x_1, x_2)$ with the change of variables
\beno
x_1 &=& y_1+y_2\\
x_2 &=& y_1 - y_2.
\eeno
We get the function called
\beno
\widetilde{v}_c(y_1, y_2) := (|y_1 -y_2|^2+\tau^2|y_1+y_2|^2)^{\frac p2} - c|y_1+y_2|^p.
\eeno
Recall that similarly $\widetilde{U}(y_1, y_2) := U(y_1 + y_2, y_1-y_2) = U(x_1, x_2).$

Let us introduce the following notation:
\begin{defi}
$c_{\mathcal{B}}:= ((p^*-1)^2+\tau^2)^{\frac{p}2}\,.$
\end{defi}
Here $\mathcal{B}$ stands for Burkholder. 
Recall Definition \ref{cM} and 
let us also denote 
\begin{defi}
 $c_{\mathcal{N}}:=\left\|\begin{pmatrix}R_1^2 - R_2^2, & \tau I  \end{pmatrix}\right\|_{L^p(\C, \C) \to L^p(\C, \C^2)}$.
\end{defi}
\begin{remark}
For $(p, \tau) \in \mathcal{T}, c_\LL = c_{\mathcal{M}}=c_{\mathcal{B}}$.
\end{remark}
This follows immediately from Theorems \ref{t:laminate},  \ref{leastbiconcave} and \ref{th:c}.

Moreover, we saw that for all $p, \tau$
$$
c_{\mathcal{L}}\le c_{\mathcal{N}}\,.
$$
This is Corollary \ref{c:approxlaminate}   essentially.

In Section \ref{s:upperbound} we are proving that
$$
c_{\mathcal{N}} \le c_{\mathcal{M}}\,.
$$

We wish to discuss the set $\Omega$ of  pairs $(p, \tau)$, for which $c_{\mathcal{L}}=c_{\mathcal{M}}$. This set of pairs contains $\mathcal{T}$ introduced above, but we do not know exactly the whole $\Omega$. By what we have just said if $(p,\tau)\in\Omega$ then the norm of our operator is $c_{\mathcal{L}}$, and we also know that for such pairs the sharp estimate from above for the norm is obtainable by means of finding the least $c$ for which $v_c$ has a zigzag concave majorant.

Now we want to see what kind of restriction the equality $c_{\mathcal{L}}=c_{\mathcal{M}}$ imposes {\it a priori}.

\begin{conj}
If $p,\tau$ are such that $c_\LL = c_{\mathcal{M}}$ then there exists a sequence of laminates $\{\nu_N\}$ with barycenter $0,$ such that $\int{\widetilde{U}_{c_{\mathcal{M}}}d\nu_N}$ increases to $0.$
\end{conj}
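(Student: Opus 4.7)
The plan is to harvest the extremizing sequence implicit in the definition of $c_{\mathcal{L}}$, rescale it by matrix dilations so as to normalize $\int \phi_2\,d\nu_N$, and then sandwich $\int\widetilde{U}_{c_{\mathcal{M}}}\,d\nu_N$ between a quantity tending to $0$ and the upper bound $0$ forced by biconcavity of $\widetilde{U}_{c_{\mathcal{M}}}$ at the barycenter.

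By definition of $c_{\mathcal{L}}$, I would first pick laminates $\mu_N\in\mathcal{L}_0(\R^{2\times 2}_{sym})$ with
\[
\frac{\int\phi_1\,d\mu_N}{\int\phi_2\,d\mu_N}\;\longrightarrow\;c_{\mathcal{L}}=c_{\mathcal{M}}.
\]
Since $c_{\mathcal{M}}<\infty$, by passing to a tail we may assume $\int\phi_2\,d\mu_N>0$ (the alternative would force an infinite ratio or $\mu_N=\delta_0$, contradicting the hypothesis). Put $\lambda_N:=(\int\phi_2\,d\mu_N)^{-1/p}$ and let $\nu_N$ be the pushforward of $\mu_N$ under the matrix dilation $A\mapsto\lambda_N A$. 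Because $f\mapsto f(\lambda\,\cdot\,)$ preserves rank-one convexity, $\nu_N$ remains a laminate with barycenter $0$; by the $p$-homogeneity of $\phi_1,\phi_2$,
\[
\int\phi_2\,d\nu_N=1,\qquad \int\phi_1\,d\nu_N=\frac{\int\phi_1\,d\mu_N}{\int\phi_2\,d\mu_N}\;\longrightarrow\;c_{\mathcal{M}}.
\]

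The squeeze then follows. In $y$-coordinates $\widetilde{v}_{c_{\mathcal{M}}}=\phi_1-c_{\mathcal{M}}\phi_2$, so
\[
\int\widetilde{v}_{c_{\mathcal{M}}}\,d\nu_N=\int\phi_1\,d\nu_N-c_{\mathcal{M}}\;\longrightarrow\;0.
\]
By Theorem~\ref{leastbiconcave}, $\widetilde{U}_{c_{\mathcal{M}}}$ is a biconcave majorant of $\widetilde{v}_{c_{\mathcal{M}}}$ with $\widetilde{U}_{c_{\mathcal{M}}}(0,0)=0$. The Jensen-type inequality for biconcave functions against a laminate with barycenter $0$---precisely the inequality invoked inside the proof of Theorem~\ref{th:c}---yields $\int\widetilde{U}_{c_{\mathcal{M}}}\,d\nu_N\leq\widetilde{U}_{c_{\mathcal{M}}}(0,0)=0$, while the pointwise domination $\widetilde{U}_{c_{\mathcal{M}}}\geq\widetilde{v}_{c_{\mathcal{M}}}$ supplies the matching lower bound. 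Combining,
\[
\int\widetilde{v}_{c_{\mathcal{M}}}\,d\nu_N\;\leq\;\int\widetilde{U}_{c_{\mathcal{M}}}\,d\nu_N\;\leq\;0,
\]
forcing the middle term to $0$ from below. Extracting a subsequence produces monotone convergence, matching ``increases to $0$''.

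The step I expect to be the most delicate is the Jensen inequality for biconcave functions tested against a general laminate on $\R^{2\times 2}_{sym}$. Biconcavity of a function of the diagonal entries $(A_{11},A_{22})$ is \emph{strictly} weaker than rank-one concavity on symmetric matrices, because the rank-one directions in $\R^{2\times 2}_{sym}$ are of the form $\lambda vv^T$, which move the diagonal along the entire cone $\{(v_1^2,v_2^2):v\in\R^2\}$ rather than only along the coordinate axes. So $\int\widetilde{U}\,d\nu\leq \widetilde{U}(\overline\nu)$ is not a formal consequence of the laminate definition alone. One must therefore either verify that the specific Burkholder function $\widetilde{U}_{c_{\mathcal{M}}}$ is in fact rank-one concave on $\R^{2\times 2}_{sym}$, or restrict the supremum defining $c_{\mathcal{L}}$ to laminates whose prelaminate approximants split only in the diagonal-parallel rank-one directions $\diag(\ast,0)$ and $\diag(0,\ast)$---a class which already contains the extremizers built in Theorem~\ref{t:laminate} and therefore suffices for the present purpose. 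Once that point is secured, the remainder of the argument is routine homogeneity combined with the pointwise sandwich above.
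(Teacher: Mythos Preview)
The statement you are attempting to prove is labeled a \emph{Conjecture} in the paper, and the paper does not supply a proof. What follows the conjecture is a heuristic discussion: on $\mathcal{T}$ the authors note that the explicit laminates of Theorem~\ref{t:laminate} give $-\mathcal{O}(1)\le\int\widetilde U_{c_{\mathcal M}}\,d\nu_N\le 0$; outside $\mathcal{T}$, starting from an extremizing sequence they derive the same sandwich as yours in the unnormalized form
\[
-\tfrac1N\;\le\;\frac{\int(\phi_1-c_{\mathcal M}\phi_2)\,d\nu_N}{\int\phi_2\,d\nu_N}\;\le\;\frac{\int\widetilde U_{c_{\mathcal M}}\,d\nu_N}{\int\phi_2\,d\nu_N}\;\le\;0,
\]
obtaining only $\bigl|\int\widetilde U_{c_{\mathcal M}}\,d\nu_N\bigr|=o\bigl(\int\phi_2\,d\nu_N\bigr)$ and stopping there. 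Your rescaling by $\lambda_N=(\int\phi_2\,d\mu_N)^{-1/p}$ is a genuine addition: it exploits the $p$-homogeneity of $\phi_2$ and of $\widetilde U_{c_{\mathcal M}}$ to force $\int\phi_2\,d\nu_N=1$, which converts the paper's $o(\int\phi_2)$ into the desired convergence to $0$. The paper does not record this step.

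The obstruction, which you have located precisely, is that the upper half of the sandwich $\int\widetilde U_{c_{\mathcal M}}\,d\nu\le\widetilde U_{c_{\mathcal M}}(0,0)$ requires rank-one concavity of $\widetilde U_{c_{\mathcal M}}$ on $\R^{2\times 2}_{sym}$, not merely biconcavity in the diagonal entries. As you observe, rank-one directions in $\R^{2\times 2}_{sym}$ project to the entire closed first quadrant of $(A_{11},A_{22})$, so biconcavity is strictly weaker. The paper invokes this same step both in the proof of Theorem~\ref{th:c} and in the post-conjecture discussion without further comment; one may read the paper as implicitly restricting $c_{\mathcal L}$ to laminates built from diagonal splittings (which is all that Theorem~\ref{t:laminate} produces and for which the duality with biconcave functions is legitimate), but as written the gap stands. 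A second, smaller issue: your appeal to Theorem~\ref{leastbiconcave} for the existence of a biconcave majorant $\widetilde U_{c_{\mathcal M}}$ with value $0$ at the origin is justified only on $\mathcal{T}$; off $\mathcal{T}$ it is not established in the paper that the infimum defining $c_{\mathcal M}$ is attained.
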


\begin{remark}
In fact, this is exactly what happens on $\mathcal{T}$. Namely, if $\nu_N = \frac 14 \mu_N + \frac 14 \delta_{(-1,1)}+\frac 12 \delta_{(0, -1)}$ and $(p, \tau) \in \mathcal{T}$ then 
\beno-\mathcal{O}(1) \leq \int\widetilde{U}_{c_{\mathcal{M}}}d\nu_N \leq 0.
\eeno
\end{remark}

If $(p, \tau)$ is not in $\mathcal{T},$ but $c_\LL = c_{\mathcal M}$ then we get something interesting.  By the definition of $c_\LL,$ there exists some $\nu_N$ with barycenter $0,$ and such that $\frac{\int \phi_1 d\nu_N}{\int \phi_2 d\nu_N} \geq c_{\mathcal M} - \frac 1N.$  Then we get 
\beno
- \frac 1N \leq \frac{\int (\phi_1 - c_{\mathcal M}\phi_2) d\nu_N}{\int \phi_2 d\nu_N} \leq \frac{\int \widetilde{U}_{c_{\mathcal M}}d\nu_N}{\int \phi_2 d\nu_N} \leq 0.
\eeno
Therefore, 
\be\label{littleo}
\left|\int \widetilde{U}_{c_{\mathcal M}} d\nu_N\right| = o\left(\int\phi_2 d\nu_N\right).
\ee
For (\ref{littleo}) it would be sufficient to have 
\beno
\left|\int \widetilde{U}_{c_{\mathcal M}} d\nu_N\right| = \mathcal{O}\left(1\right).
\eeno
Remembering that $\int{\widetilde{U}_{c_{\mathcal M}}d\nu_N} \leq 0$ we can write this as
\beno
-c \leq \int{\widetilde{U}_{c_{\mathcal M}}d\nu_N} \leq 0.
\eeno
This was exactly the case for $(p, \tau) \in \mathcal T$ with $\nu_N =  \frac 14 \mu_N + \frac 14 \delta_{(-1,1)}+\frac 12 \delta_{(0, -1)}.$  The reason for that was because
\be\label{reasonforlittleo}
\overline{\mu}_N = (1, 1) \text{ and }\int{\widetilde{U}_{c_{\mathcal M}}d\mu_N} = \widetilde{U}_{c_{\mathcal M}}(1,1)
\ee
For any $w$ biconcave and $\mu$ with barycenter $(1,1)$ we have $\int w d\mu \leq w(1, 1).$ But in \eqref{reasonforlittleo} we have the case when the equality is attained. To understand better the case when the equality can be attained when integrating a biconcave function against a laminate, let us consider first a simpler question when equality is attained in integrating the usual concave function against a usual probability measure.

  If $w$ is concave, then $\int w d\mu \leq w(1, 1)$ is true for any probability measure $\mu.$  There are only two ways to get equality (i.e. $\int w d\mu = w(1, 1)$):  1) if $\mu$ is a delta measure at $(1, 1)$ or 2) if $w$ is linear on the convex hull of the support of measure $\mu$ (degenerate concave).

Coming back to the attained equality in (\ref{reasonforlittleo}), for biconcave $\widetilde{U}_{c_{\mathcal M}},$ we see that (\ref{reasonforlittleo}) happened also exactly because the Burkholder function $\widetilde{U}_{c_{\mathcal M}},$ is not only biconcave on the cones $C_1 \cup C_2,$ but degenerate biconcave, meaning that $C_1 \cup C_2$ is foliated by curves on which one of the concavities degenerates into linearity.  We may conjecture that the same geometric picture happens for those $(p, \tau)$ outside of $\mathcal T,$ for which $c_\LL = c_{\mathcal M},$ but we do not know how to prove this.  

To summarize, we have the following.
\begin{itemize}

\item  For all $p$ and $\tau, c_{\mathcal M} \geq c_{\mathcal{N}}\geq c_\LL \geq ((p^*-1)^2+\tau^2)^{\frac p2} =: c_{\mathcal{B}}(p, \tau).$ All four constants coincide at least for $(p,\tau)\in \mathcal T$.

\item  For all $p \in (1, 2),$ there exists a $\tau_0$ such that for all $|\tau| > \tau_0, c_{\mathcal M} > c_{\mathcal{B}}(p, \tau)$ (by \cite{BJV2} Remark 27).

\item  $(p, \tau)$ such that $c_\LL = c_{\mathcal M}$ holds for all $(p, \tau) \in \mathcal T,$ but may also be true outside of $\mathcal T.$

\item  By a modification of \cite{BJV2},  Remark 27, one can prove that for all $p \in (1,2),$ there exits $\tau_0$ such that, for all $|\tau| \geq \tau_0, c_{\mathcal{N}} > c_{\mathcal{B}}(p, \tau).$ 

\item We of course expect that always $c_{\mathcal{N}}=c_\LL$.
\end{itemize}

\section{Upper Bound Estimate}\label{s:upperbound}

\subsection{Background information and notation}
We will use similar notation, estimates and reasoning developed in \cite{BaMH} and \cite{BaJa}.  Let $B_t=(Z_t, T-t)$ denote space-time Brownian motion starting at $(0, T) \in \R_+^3 := \R^2 \times (0, \infty),$ where $Z_t$ is standard Brownian motion in the plane.  There is a pseudo-probability measure $P^{T}$ associated with the process and we will denote $\mathbb{E}^T$ as the corresponding expectation.

For $\phi \in C_c^{\infty}(\C),$ we denote $U_\phi(z,t),$ as the heat extension to the upper half space, in other words $U_\phi$ is the solution to 
\begin{displaymath}
   \left\{
     \begin{array}{lr}
       \partial_t U_\phi - \frac 12 \Delta U_\phi & ,\,\,R_+^3\\
       U_\phi = \phi,\,\, R^2.
     \end{array}
   \right.
\end{displaymath} 
By It\^o's formula we get the relation, 
\be \label{heatmartingale}
U_\phi(B_t) - U_\phi(B_0) = \int_0^t{\nabla U_\phi(B_s)\cdot dZ_s},
\ee
which is a martingale.  For a $2 \times 2$ matrix $A$ we denote
\beno
(A*U_\phi)_t := \int_0^t{A\nabla U_\phi(B_s)\cdot dZ_s}
\eeno
as a martingale transform.  Throughout Section \ref{s:upperbound} we will refer to the matrices \[
A_1 =
\left( {\begin{array}{cc}
 1 & 0  \\
 0 & 0  \\
 \end{array} } \right),\,\, A_2 =
\left( {\begin{array}{cc}
 0 & 0  \\
 0 & 1  \\
 \end{array} } \right),\,\, I =
\left( {\begin{array}{cc}
 1 & 0  \\
 0 & 1  \\
 \end{array} } \right). 
\] 
If we rewrite (\ref{heatmartingale}) in the form $X_t = X_1^t + i X_2^t = (I*\phi)_t = \int_0^t{\nabla U_\phi(B_s)\cdot dZ_s},$ then its martingale transform will be denoted as $Y_t = Y_1^t + i Y_2^t = ((A_1-A_2)*\phi)_t = \int_0^t{(A_1 - A_2)\nabla U_\phi(B_s)\cdot dZ_s}.$  The quadratic variation of $X_i$ and $Y_i$ are 
\beno
\langle X_i \rangle_t = \int_0^t{|\nabla U_{\phi_i}(B_s)|^2ds} = \int_0^t{\left|\left( {\begin{array}{cc}
 \partial_xU_{\phi_i}(B_s) \\
 -\partial_yU_{\phi_i}(B_s) \\
 \end{array} } \right) \right|^2ds} = \langle Y_i \rangle_t,\,\,for\,\, i = 1,2.
\eeno
Then, $\langle X \rangle_t = \langle X_1 \rangle_t+\langle X_2 \rangle_t = \langle Y_1 \rangle_t+\langle Y_2 \rangle_t = \langle Y \rangle_t.$

\begin{defi}
A process $H$ is called differentially subordinate to a process $K,$ if $\frac{d}{dt}\langle H \rangle_t \leq \frac{d}{dt}\langle K \rangle_t.$
\end{defi}

We have computed that $Y$ is differentially subordinate to $X.$  Note that $Y$ is the continuous version of the martingale transform (the discrete version of Burkholder's martingale transform is $\sum_{k=1}^n{d_k} \rightarrow \sum_{k=1}^n{\varepsilon_k d_k},$ where $\varepsilon_k\in\{\pm 1\}$ and $\{d_k\}_k$ is a martingale difference sequence and $n \in \Z_+$), since $Y$ is differentially subordinate to $X.$

\subsection{Extending the martingale estimate to continuous time martingales}
\begin{theorem}\label{burkholdertypethm}
Let $X$ and $Y$ be two complex-valued martingales, such that $Y$ is the martingale transform of $X$ (in other words $\frac{d}{dx}\langle X \rangle_t \leq \frac{d}{dx}\langle Y \rangle_t$) then $\|\tau^2|X|^2 +|Y|^2\|_p \leq ((p^*-1)^2+\tau^2)^{\frac 12}\|X\|_p,$ with the best possible constant for $1<p<2$ and $\tau^2 \leq p^*-1$ or $2 \leq p < \infty$ and $\tau \in \R.$
\end{theorem}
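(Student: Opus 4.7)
The plan is to prove Theorem \ref{burkholdertypethm} by the Bellman--Burkholder function method applied directly in continuous time, as in \cite{BaMH, BaJa}, using the ``Burkholder-type'' function $U = U_{p,\tau}$ from Definition \ref{definitionUv}. By Theorem \ref{leastbiconcave}(2), on the specified range of $(p, \tau)$, $U$ is a zigzag concave majorant of $v_c$ with $c = ((p^*-1)^2 + \tau^2)^{p/2}$, and $U(0,0) = 0$. The desired inequality is equivalent to $\mathbb{E}[v_c(X_\infty, Y_\infty)] \leq 0$, so since $v_c \leq U$, it is enough to show $\mathbb{E}[U(X_t, Y_t)] \leq U(X_0, Y_0) = 0$ for every $t \geq 0$.

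To that end I would first mollify: set $U^\eps := U * \rho_\eps$ with a smooth, compactly supported radial mollifier $\rho_\eps$. Zigzag concavity (concavity in the two directions $(1,1)$ and $(1,-1)$ in $x$-coordinates) is preserved under convolution with a nonnegative measure, so $U^\eps \in C^\infty$ remains zigzag concave. It\^o's formula yields
\beno
U^\eps(X_t, Y_t) - U^\eps(X_0, Y_0) = \text{local mart.} + \frac{1}{2}\int_0^t \bigl( U^\eps_{xx}\, d\langle X\rangle_s + 2\, U^\eps_{xy}\, d\langle X, Y\rangle_s + U^\eps_{yy}\, d\langle Y\rangle_s\bigr).
\eeno
Denoting the densities $a := d\langle X\rangle/ds$, $c := d\langle Y\rangle/ds$, $b := d\langle X, Y\rangle/ds$, differential subordination gives $c \leq a$ and the Kunita--Watanabe inequality gives $b^2 \leq ac$. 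The pointwise integrand equals $F(\sqrt{a}, \sign(b)\sqrt{c})$ with $F(h, k) := U^\eps_{xx} h^2 + 2\, U^\eps_{xy}\, hk + U^\eps_{yy}\, k^2$; zigzag concavity gives $F(h, \pm h) \leq 0$, and by examining the explicit piecewise definition of $U$ in its two regions $\{|x_2| \geq (p^*-1)|x_1|\}$ and $\{|x_2| \leq (p^*-1)|x_1|\}$, one checks that $F(h, k) \leq 0$ for all $(h, k)$ with $|k| \leq |h|$. This yields $\mathbb{E}[U^\eps(X_t, Y_t)] \leq U^\eps(X_0, Y_0)$, and letting $\eps \to 0$ transfers the bound to $U$.

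For complex-valued $X, Y$ I would use that $U$ depends only on $|x_1|, |x_2|$, so extend it via $U(x, y) := U(|x|, |y|)$ on $\C \times \C$, inheriting the needed concavity for a radial It\^o computation after a further standard regularization near the coordinate axes; the bracket decomposition $\langle X\rangle = \langle \operatorname{Re} X\rangle + \langle \operatorname{Im} X\rangle$ (and analogously for $Y$) interacts with the radial part of $U$ in exactly the way needed, as handled in \cite{BaMH, BaJa}. Taking expectations, letting $\eps \to 0$ and then $t \to \infty$ (using $L^p$ uniform integrability) gives $\mathbb{E}[v_c(X_\infty, Y_\infty)] \leq 0$, hence the inequality of the theorem; sharpness of the constant follows from Theorem \ref{t:laminate}. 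The main obstacle is verifying that the It\^o drift is pointwise nonpositive: zigzag concavity only pins down $F$ along the two diagonal directions, while continuous quadratic variations can produce any positive semidefinite $2 \times 2$ bracket matrix subject to $c \leq a$ and $b^2 \leq ac$, so the pointwise inequality must be extracted from the explicit structure of $U$ in each of its two pieces, leaning on the detailed analysis of the Burkholder functions in \cite{BJV1, BJV2, BJV3}.
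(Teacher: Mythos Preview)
Your approach is along the same lines as the paper's---apply It\^o's formula to a Burkholder-type function and show the drift is nonpositive under differential subordination---but there is one notable difference in execution. The paper does not work with the piecewise function $U$; it works with the single function
\[
u(x,y)=\alpha_p\,(|x|+|y|)^{p-1}\bigl[|y|-(p^*-1)|x|\bigr]
\]
on all of $\C\times\C$. The point is that $v\le u$ globally (not merely $v\le U$), and $u$ admits the explicit Hessian identity
\[
\langle h,\,u_{xx}h\rangle+2\langle h,\,u_{xy}k\rangle+\langle k,\,u_{yy}k\rangle=-c_{p,\tau}(A+B+C),
\]
where $A=p(p-1)(|h|^2-|k|^2)(|x|+|y|)^{p-2}$ and $B,C\ge 0$. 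This immediately gives a nonpositive drift once $|k|\le|h|$, i.e.\ under differential subordination, and the identity is stated and used directly for complex $x,y,h,k$. Since $u$ differs from Burkholder's classical function only by the scalar factor $\bigl(1+\tau^2/(p^*-1)^2\bigr)^{(p-2)/2}$, the paper simply invokes the computation from \cite{BaJa}.

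Your route via the mollified piecewise $U$ is in principle workable, but it trades this clean identity for exactly the burden you flag as the ``main obstacle'': on the $v$-region you must verify the Hessian-form inequality by a separate computation (and, as you correctly note, zigzag concavity alone does not suffice). That check does go through---for $\tau=0$ it is a two-line calculation using $|x_2|\lessgtr(p^*-1)|x_1|$ and $p\gtrless 2$, and the general $(p,\tau)\in\mathcal T$ case follows similarly---but it is strictly more work than switching to $u$. The mollification step also becomes unnecessary once you use $u$, which is $C^2$ off the coordinate axes and is handled by the standard localization in \cite{BaJa}. Finally, the complex-valued case in the paper is absorbed into the same Hessian identity for $u$; your radial-extension sketch would ultimately have to reproduce that same computation.
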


This was basically shown in \cite{BJV1}, \cite{BJV2} and \cite{BJV3}, but we will give the idea of the proof.  The proof here only requires the same modification to continuous time martingales as was done in \cite{BaJa}, for $\tau = 0.$  Let 
\beno
u(x,y) &=& p\left(1-\frac{1}{p^*}\right)^{p-1}\left(1+\frac{\tau^2}{(p^*-1)^2} \right)^{p-2}(|y|-(p^*-1)|x|)(|x|+|y|)^{p-1}\,\,and\\
v(x,y) &=& (\tau^2|x|^2+|y|^2)^{\frac p2} - ((p^*-1)^2+\tau^2)^{\frac 12}|x|^p.
\eeno
It was shown in Theorem \ref{leastbiconcave} that $v \leq u.$  The key properties of $u$ and $v,$ that will be used, are:\\
(1)  $v(x,y) \leq u(x,y)$\\
(2)  For all $x,y,h,k \in \C,$ if $|x||y| \neq 0$ then 
\beno
\langle h u_{xx}(x,y), h \rangle + 2\langle hu_{xy}(x,y), k\rangle + \langle ku_{yy}(x,y),k \rangle = -c_{p,\tau}(A+B+C),
\eeno
where $c_{p,\tau} > 0$ is a constant only depending on $\tau$ and $p$ and 
\beno
A &=& p(p-1)(|h|^2-|k|^2)(|x|+|y|)^{p-2},\,\,B = p(p-2)[|k|^2 - (y',k)^2]|y|^{-1(|x|+|y|)^{p-1}},\\
C &=& p(p-1)(p-2)[(x',h) + (y',k)]^2|x|(|x|+|y|)^{p-3},
\eeno
where $x' = x/|x|,\,y' = y/|y|.$\\
(3) $u(x,y) \leq 0$ if $|y| \leq |x|.$\\
Since $u$ here only differs from the one in \cite{BaJa} by a multiple of $\left(1+\frac{\tau^2}{(p^*-1)^2} \right)^{p-2},$ then the rest of the argument follows in an identical way which we briefly outline.

By It\^o's formula, 
\beno
u(X_t,Y_t) = u(X_0, Y_0) + \int_0^t{\langle u_x(X_s,Y_s),dX_s\rangle} + \int_0^t{\langle u_y(X_s,Y_s),dY_s\rangle} + \frac{I_t}{2},
\eeno
where $I_t$ contains the second order terms.  We can assume, without loss of generality that $|Y_0| \leq |X_0|,$ so that when we take expectation of $u(X_t,Y_t),$ we obtain $\mathbb{E}u(X_t,Y_t) \leq \mathbb{E}(I_t/2).$  Using property (2) above, in the martingale setting, one can obtain $I_t \leq -c_{p,\tau}\int_0^t{(|X_s|+|Y_s|)^{p-2}d(\langle X \rangle_s - \langle Y \rangle_s)} \leq 0,$ since $B,C \geq 0$ and using the differential subordinate assumption.  Therefore, $\mathbb{E}v(X_t,Y_t) \leq 0$ by property (1) above.

\subsection{Connecting the martingales to the Riesz transforms}\hfill

Now we choose $X_t := (I*U_\phi)_t$ and $Y_t := ((A_1 - A_2)*U_\phi)_t$ to obtain the following corollary of Theorem \ref{burkholdertypethm}.

\begin{corollary}
If $1<p<2$ and $\tau^2 \leq p^*-1$ or $2 \leq p < \infty$ and $\tau \in \R$ then 
\beno
\|\tau^2|(I*U_\phi)_t|^2+|((A_1 - A_2)*U_\phi)_t|^2\|_p \leq ((p^*-1)^2+\tau^2)^{\frac 12}\|(I*U_\phi)_t\|_p.
\eeno
\end{corollary}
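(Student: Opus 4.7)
The plan is essentially a direct application of Theorem \ref{burkholdertypethm} to the pair of continuous-time complex-valued martingales
\[
X_t := (I*U_\phi)_t, \qquad Y_t := ((A_1 - A_2)*U_\phi)_t.
\]
The two hypotheses required by that theorem are that $X$ and $Y$ be complex-valued martingales and that $Y$ be differentially subordinate to $X$. Both have effectively been established in the setup preceding the corollary; the proof therefore amounts to pointing this out and noting that the admissible range of $(p,\tau)$ is exactly $\mathcal T$.

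For the first hypothesis, writing $\phi = \phi_1 + i\phi_2$ with $\phi_j \in C_c^\infty(\R^2;\R)$, the heat extensions $U_{\phi_j}$ are smooth on $\R_+^3$ and inherit Gaussian decay in the spatial variable. Applying It\^o's formula as in \eqref{heatmartingale} to each $U_{\phi_j}(B_t)$ produces continuous real-valued martingales, and the complex combinations give $X_t = X_1^t + iX_2^t$ and $Y_t = Y_1^t + iY_2^t$ as continuous $\C$-valued martingales that lie in every $L^p$ uniformly in $t$.

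For the second hypothesis, observe that $A_1 - A_2 = \diag(1,-1)$ is an isometry of $\R^2$, so that for each $i=1,2$
\[
\langle Y_i\rangle_t = \int_0^t |(A_1 - A_2)\nabla U_{\phi_i}(B_s)|^2\,ds = \int_0^t |\nabla U_{\phi_i}(B_s)|^2\,ds = \langle X_i\rangle_t,
\]
so that $\langle Y\rangle_t = \langle X\rangle_t$, as was already recorded in the setup. In particular differential subordination (with equality) holds, and Theorem \ref{burkholdertypethm} can be invoked on the range $(p,\tau)\in\mathcal T$ to yield the claimed inequality with constant $((p^*-1)^2+\tau^2)^{1/2}$.

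I do not anticipate a genuine obstacle here; the content of the corollary is simply the translation of Theorem \ref{burkholdertypethm} into the heat-martingale language. The only mild technical points are the standard ones—justifying It\^o's formula and the passage to a terminal $L^p$ limit along the sequence of stopping times implicit in the proof of Theorem \ref{burkholdertypethm}—which are routine for the heat extension of a compactly supported smooth function and are handled exactly as in the analogous arguments of \cite{BaMH} and \cite{BaJa}.
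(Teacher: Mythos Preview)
Your proposal is correct and matches the paper's approach exactly: the paper states the corollary immediately after the sentence ``Now we choose $X_t := (I*U_\phi)_t$ and $Y_t := ((A_1 - A_2)*U_\phi)_t$ to obtain the following corollary of Theorem \ref{burkholdertypethm},'' relying on the differential subordination $\langle Y\rangle_t=\langle X\rangle_t$ already computed in the background subsection. Your write-up simply makes these verifications explicit.
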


\begin{prop}\label{martingaleconnection}
For all $\phi \in C_c^{\infty}$ and all $p \in (1, \infty),\,\,\lim_{T\to\infty}\|(I*U_\phi)_T\|_p \leq \|\phi\|_p.$
\end{prop}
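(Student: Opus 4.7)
The plan is to reduce $(I*U_\phi)_T$ to an explicit boundary expression using It\^o's formula, then split the resulting difference and control each piece by hand. First I would apply It\^o's formula to the process $t\mapsto U_\phi(B_t)=U_\phi(Z_t+x,T-t)$, thinking of the pseudo-probability setup as Brownian motion started at $(x,T)$ and then averaged over $x\in\R^2$, as in \cite{BaMH,BaJa}. The second-order spatial contribution is the drift $\tfrac12\Delta U_\phi(B_t)\,dt$, while the time coordinate contributes $-\partial_tU_\phi(B_t)\,dt$; since $U_\phi$ solves the heat equation $\partial_tU_\phi=\tfrac12\Delta U_\phi$, these two drifts cancel exactly, so that
\[
(I*U_\phi)_T\;=\;U_\phi(B_T)-U_\phi(B_0)\;=\;\phi(Z_T+x)-U_\phi(x,T),
\]
using $U_\phi(\cdot,0)=\phi$ at the terminal time $t=T$.

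Interpreting the pseudo-probability $L^p$-norm as
$\|F\|_p^p=\int_{\R^2}\mathbb{E}^T|F|^p\,dx$,
Minkowski's inequality gives
\[
\|(I*U_\phi)_T\|_p\;\le\;\bigl\|\phi(Z_T+\cdot)\bigr\|_p\;+\;\|U_\phi(\cdot,T)\|_p.
\]
A Fubini-plus-translation-invariance computation shows that the first summand equals \emph{exactly} $\|\phi\|_p$: indeed,
$\int_{\R^2}\!\int_{\R^2}|\phi(x+z)|^pp_T(z)\,dz\,dx=\|\phi\|_p^p\cdot\!\int p_T=\|\phi\|_p^p$,
where $p_T$ denotes the planar heat kernel.

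For the remainder $\|U_\phi(\cdot,T)\|_p=\|\phi*p_T\|_p$ I would invoke Young's convolution inequality to bound it by $\|\phi\|_1\|p_T\|_p$, and a direct Gaussian computation yields $\|p_T\|_p=C_pT^{-(1-1/p)}$, which tends to $0$ as $T\to\infty$ because $p>1$ and $\|\phi\|_1<\infty$ (since $\phi\in C_c^\infty$). Combining the two pieces gives $\limsup_{T\to\infty}\|(I*U_\phi)_T\|_p\le\|\phi\|_p$, which is the stated inequality. There is no genuine obstacle in the argument; the only point requiring care is notational, namely fixing unambiguously the pseudo-probability $L^p$-norm convention so that the Minkowski split above is legitimate. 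After that everything is a clean It\^o-plus-Young calculation.
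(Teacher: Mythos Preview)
Your argument is correct. The paper itself does not prove this proposition at all: immediately after stating it, the text reads ``This result was proven in \cite{BaMH}'' and moves on. What you have written is precisely the standard computation underlying that reference---use \eqref{heatmartingale} to identify $(I*U_\phi)_T=\phi(Z_T)-U_\phi(Z_0,T)$, then control the two pieces separately via translation invariance and the decay $\|p_T\|_p\to 0$. One small remark: strictly speaking you have bounded the $\limsup$, not shown a limit exists, but that is exactly what is needed downstream (and in fact both pieces have limits, so the full limit does exist and equals $\|\phi\|_p$).
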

This result was proven in \cite{BaMH}.

Now we will connect the martingales $X_t$ and $Y_t$ with the planar Riesz transforms,$R_1$ and $R_2$, in the following way.

\begin{prop} \label{connectmartingales} For all $\phi \in C_c^{\infty}(\C),$
\beno
\lim_{T\to\infty}\int_{\C}{[|\mathbb{E}^T(Y_T|B_T = z)|^2+\tau^2|\mathbb{E}^T(X_T|B_T = z)|^2]^{\frac p2}dz}\\
= \int_{\C}{[|(R_1-R_2^2)\phi|^2+\tau^2|(R_1+R_2^2)\phi|^2]^{\frac p2}dz}.
\eeno 
\end{prop}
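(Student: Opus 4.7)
The plan is to identify each conditional expectation as $T\to\infty$, recognize them as the claimed singular integral operators applied to $\phi$, and then pass to the limit in the outer $L^{p/2}$ integral via a uniform $L^p$ bound. Throughout I use the convention $\widehat{R_j^2\phi}=-\xi_j^2|\xi|^{-2}\hat\phi$, so in particular $(R_1^2+R_2^2)\phi=-\phi$ on $C_c^\infty$; under this identification (which corrects what appears to be a typographical misprint $(R_1\pm R_2^2)$ in the statement), the right-hand side reads $\int_\C[|(R_1^2-R_2^2)\phi|^2+\tau^2|\phi|^2]^{p/2}\,dz$.

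The $X$-projection is essentially trivial. Since $\nabla U_\phi$ is the full gradient of the caloric function $U_\phi$, applying It\^o's formula to $U_\phi(Z_s,T-s)$ (using $\partial_tU_\phi=\tfrac12\Delta U_\phi$) cancels all drift terms and collapses the stochastic integral to the pointwise identity $X_T=\phi(Z_T)-U_\phi(0,T)$. Hence $\mathbb{E}^T(X_T\mid B_T=z)=\phi(z)-U_\phi(0,T)$, with $U_\phi(0,T)\to 0$ as $T\to\infty$ for $\phi\in C_c^\infty$, which delivers $|\mathbb{E}^T(X_T\mid B_T=z)|\to|\phi(z)|$ as required.

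For the $Y$-projection I would use duality against an arbitrary test function $f\in C_c^\infty(\C)$. The representation $f(Z_T)-U_f(0,T)=\int_0^T\nabla U_f(Z_s,T-s)\cdot dZ_s$ together with It\^o's isometry, in the Gundy--Varopoulos background-radiation formulation of \cite{BaMH}, gives
\begin{equation*}
\mathbb{E}^T\bigl(Y_T[f(Z_T)-U_f(0,T)]\bigr)=\int_0^T\!\!\int_{\R^2}A\nabla U_\phi(w,T-s)\cdot\nabla U_f(w,T-s)\,dw\,ds,
\end{equation*}
with $A=A_1-A_2$. Parseval in $w$, combined with the identity $\int_0^T e^{-(T-s)|\xi|^2}\,ds=(1-e^{-T|\xi|^2})/|\xi|^2$, rewrites the right-hand side as
\begin{equation*}
\int_{\R^2}\langle A\xi,\xi\rangle\,\frac{1-e^{-T|\xi|^2}}{|\xi|^2}\,\hat\phi(\xi)\overline{\hat f(\xi)}\,d\xi \;\underset{T\to\infty}{\longrightarrow}\; \int_{\R^2}\frac{\xi_1^2-\xi_2^2}{|\xi|^2}\hat\phi\,\overline{\hat f}\,d\xi,
\end{equation*}
which equals $\bigl\langle-(R_1^2-R_2^2)\phi,f\bigr\rangle$. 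Arbitrariness of $f$ identifies $\mathbb{E}^T(Y_T\mid B_T=z)\to-(R_1^2-R_2^2)\phi(z)$ in the $L^2$ and distributional sense; the overall sign is irrelevant inside the modulus.

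To close the argument, set $F_T(z):=[|\mathbb{E}^T(Y_T\mid B_T=z)|^2+\tau^2|\mathbb{E}^T(X_T\mid B_T=z)|^2]^{1/2}$. Conditional Jensen together with Theorem \ref{burkholdertypethm} yields $\|F_T\|_p\le((p^*-1)^2+\tau^2)^{1/2}\|X_T\|_p$, and Proposition \ref{martingaleconnection} bounds $\|X_T\|_p$ by $\|\phi\|_p$; hence $\{F_T\}$ is uniformly bounded in $L^p$. This uniform bound, combined with the $L^2$/distributional convergence above on the dense class $C_c^\infty$, upgrades by Vitali's theorem to $\int F_T^p\,dz\to\int F_\infty^p\,dz$, which is the claim. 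The main obstacle is precisely this $L^2$-to-$L^p$ upgrade: the truncated multipliers $(1-e^{-T|\xi|^2})\langle A\xi,\xi\rangle/|\xi|^2$ are not truncations of the Riesz symbols, and their uniform-in-$T$ $L^p$ boundedness is supplied by the martingale inequality of Theorem \ref{burkholdertypethm} (via conditional Jensen) rather than by standard Calder\'on--Zygmund theory.
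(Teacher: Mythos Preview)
Your approach is the same as the paper's: both reduce the claim to the weak (duality) identities from \cite{BaMH} for the projections $\mathbb{E}^T(X_T\mid B_T=z)$ and $\mathbb{E}^T(Y_T\mid B_T=z)$, and then pass to the nonlinear $L^{p}$ limit. In fact you are more explicit than the paper, which simply quotes \eqref{diffofRieszconnection}--\eqref{sumofRieszconnection} from \cite{BaMH} and asserts that the proposition ``follows almost immediately,'' without spelling out how weak convergence of the two components yields convergence of $\int F_T^p$; you supply that missing mechanism via a uniform bound and Vitali.

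One small repair: a uniform $L^p$ bound on $F_T$ does \emph{not} give uniform integrability of $F_T^p$, which is what Vitali needs. The fix is immediate in your framework: run the conditional--Jensen/Theorem~\ref{burkholdertypethm}/Proposition~\ref{martingaleconnection} argument at an exponent $p'>p$ (any $p'\in(p,\infty)$ works, since $\phi\in C_c^\infty$) to get $\sup_T\|F_T\|_{p'}<\infty$, which does yield uniform integrability of $F_T^p$. Combined with the convergence in measure coming from your $L^2$ identification of the components, Vitali then gives $\int F_T^p\to\int F_\infty^p$ as desired.
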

This result follows almost immediately from the fact that, for all $\psi, \phi \in C_c^\infty(\C),$
\be \label{diffofRieszconnection}
\lim_{T\to\infty}\int_\C{\psi\mathbb{E}^T[Y_T|B_T = z]dz} &=& \int_\C{\psi(R_1^2-R_2^2)\phi dz}\,\,and\\ \label{sumofRieszconnection} \lim_{T\to\infty}\int_\C{\psi\mathbb{E}^T[X_T|B_T = z]dz} &=& \int_\C{\psi(R_1^2+R_2^2)\phi dz},
\ee
by \cite{BaMH}.  By (\ref{diffofRieszconnection}) and (\ref{sumofRieszconnection}) we obtain that for all $\psi, \phi \in C_c^\infty(\C),$
\beno
\lim_{T\to\infty}\int_{\C}{\psi[|\mathbb{E}^T(Y_T|B_T = z)|^2+\tau^2|\mathbb{E}^T(X_T|B_T = z)|^2]^{\frac 12}dz}\\
= \int_{\C}{\psi[|(R_1-R_2^2)\phi|^2+\tau^2|(R_1+R_2^2)\phi|^2]^{\frac 12}dz}.
\eeno 

\subsection{Main Result}\hfill
\begin{theorem}\label{mainupperboundestimate}
For $1<p<2$ and $\tau^2 \leq p^*-1$ or $2 \leq p < \infty$ and $\tau \in \R$ we have the following estimate,
\beno
\|[|(R_1^2-R_2^2)f|^2+\tau^2|f|^2]^{\frac 12}\|_p \leq ((p^*-1)^2+\tau^2)^{\frac 12}\|f\|_p
\eeno
\end{theorem}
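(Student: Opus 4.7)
By a standard density argument it suffices to establish the estimate for $f \in C_c^\infty(\C)$. Since the Fourier multiplier symbol of $R_1^2+R_2^2$ is identically $-1$, we set $\phi := -f$, so that $(R_1^2+R_2^2)\phi = f$ and $(R_1^2-R_2^2)\phi = -(R_1^2-R_2^2)f$. In these terms the quantity to be estimated is
$$\bigl\|\bigl(|(R_1^2-R_2^2)\phi|^2 + \tau^2|(R_1^2+R_2^2)\phi|^2\bigr)^{1/2}\bigr\|_p.$$
Associated with $\phi$ we introduce the heat martingales $X_t = (I*U_\phi)_t$ and $Y_t = ((A_1-A_2)*U_\phi)_t$. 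As observed in Section \ref{s:upperbound}, $Y$ is differentially subordinate to $X$, so the Corollary stated just before Proposition \ref{martingaleconnection} (which is the continuous-time incarnation of Theorem \ref{burkholdertypethm}), raised to the $p$-th power, gives for every $T>0$
$$\mathbb{E}^T\bigl[(\tau^2|X_T|^2+|Y_T|^2)^{p/2}\bigr] \leq ((p^*-1)^2+\tau^2)^{p/2}\,\mathbb{E}^T|X_T|^p.$$

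Next we transfer this bound from $(X_T,Y_T)$ to their conditional expectations given $B_T$ via Jensen's inequality applied to the function $F(a,b) := (\tau^2|a|^2+|b|^2)^{p/2}$ on $\C \times \C$. This $F$ is convex because it is the $p$-th power of a norm and $p\geq 1$. Pointwise in $z\in\C$ we therefore have
$$F\bigl(\mathbb{E}^T[X_T|B_T=z],\,\mathbb{E}^T[Y_T|B_T=z]\bigr) \leq \mathbb{E}^T\bigl[F(X_T,Y_T)\,\bigl|\,B_T=z\bigr].$$
Integrating this inequality in $z$ and combining with the Burkholder-type estimate above produces
$$\int_{\C}\bigl(\tau^2|\mathbb{E}^T[X_T|B_T=z]|^2+|\mathbb{E}^T[Y_T|B_T=z]|^2\bigr)^{p/2}\,dz \leq ((p^*-1)^2+\tau^2)^{p/2}\,\|X_T\|_p^p.$$

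It remains to let $T\to\infty$. By Proposition \ref{connectmartingales} the left-hand side converges to $\int_\C(|(R_1^2-R_2^2)\phi|^2+\tau^2|(R_1^2+R_2^2)\phi|^2)^{p/2}dz$, which by our choice of $\phi$ is exactly the $p$-th power of the quantity we wish to bound. By Proposition \ref{martingaleconnection} the right-hand side is, in the limit, at most $((p^*-1)^2+\tau^2)^{p/2}\|\phi\|_p^p = ((p^*-1)^2+\tau^2)^{p/2}\|f\|_p^p$. Extracting $p$-th roots completes the estimate.

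\textbf{Main obstacle.} The most delicate step is the Jensen-plus-integration argument performed against the pseudo-probability $P^T$: one must verify that conditioning on $B_T=z$ produces a bona fide probability kernel on the paths, and that the marginal of $B_T$ on $\C$ plays the role of Lebesgue measure rather than of a Gaussian that would degenerate as $T\to\infty$. These measure-theoretic points follow precisely the template of \cite{BaMH} and \cite{BaJa}, in which the case $\tau = 0$ is worked out; the introduction of the quadratic perturbation $\tau I$ adds no genuine difficulty because the convex function used in Jensen is still a $p$-th power of a norm, and the martingale inequality in Theorem \ref{burkholdertypethm} was already established in the required form.
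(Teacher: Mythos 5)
Your proof is correct, and it takes a slightly more direct route than the paper's.  The paper argues by duality: it pairs the quantity with a test function $\psi \in L^q$, commutes $\psi(Z_T)$ inside the conditional expectation, applies conditional Jensen to the degree-one convex map $(a,b)\mapsto(|a|^2+\tau^2|b|^2)^{1/2}$, then H\"older, and only at that point invokes Theorem \ref{burkholdertypethm} and Proposition \ref{martingaleconnection}.  You instead apply conditional Jensen once, directly to the $p$-homogeneous convex function $F(a,b)=(\tau^2|a|^2+|b|^2)^{p/2}$, integrate in $z$, and pass to the limit with Propositions \ref{connectmartingales} and \ref{martingaleconnection}.  The ingredients are the same, but you dispense with the $L^q$-duality and H\"older layers.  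The Jensen step is sound because $F$ is the $p$-th power ($p\geq 1$) of a (semi-)norm on $\C^2$, hence convex, and the tower-property identity $\int_{\C}\mathbb{E}^T[\,\cdot\,|B_T=z]\,dz=\mathbb{E}^T[\,\cdot\,]$ that you flag as the delicate point is precisely what the construction of the pseudo-probability $P^T$ in \cite{BaMH} and \cite{BaJa} guarantees (the $Z_T$-marginal is Lebesgue measure by design), so no new verification beyond those references is needed.
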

Let $\mathbb{E}^{z_0,T}$ correspond to Brownian motion starting at $(z_0,T) \in \R^3_+.$  Let $\phi \geq 0$ and $\frac 1p + \frac 1q = 1.$  Then

\beno
&& \int_{\C}{(|(R_1^2-R_2^2)\phi|^2+\tau^2|\phi|^2)^{\frac 12}}\psi(z)dz \\
&=& \int_{\C}{\lim_{T \to \infty}\int_\C{(|\mathbb{E}^{(z_0,T)}(Y_T|B_T=z)|^2+\tau^2|\mathbb{E}^{(z_0, T)}(X_T|B_T =z)|^2)^{\frac 12}dz_0}\psi(z)dz}\\
&=& \lim_{T\to \infty}\int_{\C}{\int_\C{(|\mathbb{E}^{(z_0,T)}(Y_T\psi(Z_T)|B_T=z)|^2+\tau^2|\mathbb{E}^{(z_0, T)}(X_T\psi(Z_T)|B_T =z)|^2)^{\frac 12}dz}dz_0}\\
& \leq & \lim_{T \to \infty}\int_{\C}{\mathbb{E}^{(z_0,T)}\left| {\left(\begin{array}{c}
 Y_T\psi(Z_T)\\
 \tau X_T\psi(Z_T) \end{array}\right)}\right|dz_0}\\
& \leq & \left(\lim_{T \to \infty} \int_\C{\mathbb{E}^{(z_0,T)}\left| {\left(\begin{array}{c}
 Y_T\\
 \tau X_T\end{array}\right)}\right|^p dz_0} \right)^{\frac 1p}   \left(\lim_{T \to \infty} \int_\C{\mathbb{E}^{(z_0,T)}| \phi(Z_T)|^q dz_0}\right)^{\frac 1q}\\
\eeno
\beno
&=& \left(\lim_{T \to \infty} \int_\C{\mathbb{E}^{(z_0,T)}\left| {\left(\begin{array}{c}
 Y_T\\
 \tau X_T\end{array}\right)}\right|^p dz_0} \right)^{\frac 1p}   \|\psi\|_{L^q},\\
\eeno
where the last equality is by Proposition \ref{martingaleconnection}.  By linearity we have this result for any $\psi \in L^q.$  Therefore, by duality
\beno
&&\left(\int_{\C}{(|(R_1^2-R_2^2)\phi|^2+\tau^2|\phi|^2)^{\frac p2}} \right)^{\frac 1p}  \leq  \left(\lim_{T \to \infty} \int_\C{\mathbb{E}^{(z_0,T)}\left| {\left(\begin{array}{c}
 Y_T\\
 \tau X_T\end{array}\right)}\right|^p dz_0} \right)^{\frac 1p}\\
& = &(\lim_{T \to \infty}\mathbb{E}^T[(|Y_T|+\tau^2|X_T|^2)^{\frac p2}])^{\frac 1p} \leq   ((p^*-1)^2+\tau^2)^{\frac 12} \lim_{T \to \infty}(\mathbb{E}|X_T|^p)^{\frac 1p}\\
& = & ((p^*-1)^2+\tau^2)^{\frac 12}\|\phi\|_{L^p},
\eeno
where the last inequality is due to Theorem \ref{burkholdertypethm} and the last equality is by Proposition \ref{connectmartingales}.

\begin{corollary}
For $1<p<2$ and $\tau^2 \leq p^*-1$ or $2 \leq p < \infty$ and $\tau \in \R,$
\beno
\left\|{\left(\begin{array}{c}
2R_1R_2\,,\,
\tau I\end{array}\right)}\right\|_{L^p(\C, \C) \to L^p(\C, \C^2)}& = & \left\|{\left(\begin{array}{c}
R_1^2-R_2^2\,,\,
\tau I\end{array}\right)}\right\|_{L^p(\C, \C) \to L^p(\C, \C^2)}\\
& = & ((p^*-1)^2 + \tau^2)^{\frac 12}.
\eeno
\end{corollary}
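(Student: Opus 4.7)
The corollary packages two separate statements. The equality of the middle and right-hand expressions, $\|(R_1^2-R_2^2,\tau I)\|_{p\to p}=((p^*-1)^2+\tau^2)^{1/2}$, is already in hand: the upper bound is exactly Theorem \ref{mainupperboundestimate}, while the matching lower bound is provided in the indicated range of $(p,\tau)$ by Theorem \ref{t:laminate} together with \eqref{reductiontouselaminatethm}. So the only content left is to show that swapping $R_1^2-R_2^2$ for $2R_1R_2$ does not change the vector-valued operator norm, which I would handle by a rotational symmetry argument on the Fourier side.

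\textbf{Rotation conjugacy.} For $\alpha\in\R$ denote by $U_\alpha f(x):=f(R_\alpha x)$ the rotation operator on $L^p(\C,\C)$, where $R_\alpha$ is the counterclockwise rotation by $\alpha$; $U_\alpha$ is an isometry on $L^p$, and coordinate-wise it is also an isometry on $L^p(\C,\C^2)$. On the Fourier side, a standard calculation gives $\widehat{U_\alpha f}(\xi)=\hat f(R_\alpha\xi)$, hence conjugation by $U_\alpha$ acts on Fourier multipliers as $m(\xi)\mapsto m(R_{-\alpha}\xi)$. The symbols of $R_1^2-R_2^2$ and $2R_1R_2$ are $-(\xi_1^2-\xi_2^2)/|\xi|^2$ and $-2\xi_1\xi_2/|\xi|^2$, respectively; substituting $\xi\mapsto R_{-\pi/4}\xi$ in the first symbol yields exactly the second. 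Consequently
\begin{equation*}
U_{-\pi/4}\,(R_1^2-R_2^2)\,U_{\pi/4}\;=\;2R_1R_2.
\end{equation*}

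\textbf{Conclusion.} Since $U_{\pm\pi/4}$ commutes with the scalar multiplication operator $\tau I$ and preserves the norm on $L^p(\C,\C^2)$, applying $U_{-\pi/4}$ componentwise to the pair $((R_1^2-R_2^2)f,\,\tau f)$ gives
\begin{equation*}
\bigl\|((R_1^2-R_2^2)f,\tau f)\bigr\|_{L^p(\C,\C^2)}\;=\;\bigl\|(2R_1R_2\,g,\,\tau g)\bigr\|_{L^p(\C,\C^2)},\qquad g:=U_{-\pi/4}f,
\end{equation*}
and $\|g\|_p=\|f\|_p$. Taking the supremum over $\|f\|_p=1$ yields $\|(R_1^2-R_2^2,\tau I)\|_{p\to p}=\|(2R_1R_2,\tau I)\|_{p\to p}$. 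Combining with Theorems \ref{t:laminate} and \ref{mainupperboundestimate} gives both equalities stated in the corollary.

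\textbf{Where the work lies.} There is no real obstacle here: the upper and lower bounds for $(R_1^2-R_2^2,\tau I)$ have already been established (that is where the Burkholder-function analysis and the laminate construction were needed), and the passage to $2R_1R_2$ is only a symmetry observation. The one thing to verify carefully is that rotation acts as an $L^p(\C,\C^2)$ isometry coordinate by coordinate so that it genuinely equates the two vector-valued operator norms; this is immediate from the pointwise definition of the target-space norm $(|v_1|^2+|v_2|^2)^{1/2}$ and the change of variables in $x$.
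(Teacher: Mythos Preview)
Your proof is correct and follows essentially the same approach as the paper: the paper derives the value $((p^*-1)^2+\tau^2)^{1/2}$ from Theorem~\ref{t:laminate} (lower bound) and Theorem~\ref{mainupperboundestimate} (upper bound), and dismisses the first equality with the one-line remark that $R_1^2-R_2^2$ and $2R_1R_2$ are rotations of one another by $\pi/4$. You simply make that rotation argument explicit on the Fourier side and verify that it intertwines correctly with the vector-valued $L^p$ norm.
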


Since $R_1^2-R_2^2$ and $2R_1R_2$ are just rotations of one another by $\pi/4$ then we have the equality of the two operator norms.  The lower bound was computed as $((p^*-1)^2 + \tau^2)^{\frac 12}$ in Theorem \ref{t:laminate} (or by another technique in \cite{BV}).  The upper bound was just computed as the same, giving the desired result.

\bibliographystyle{amsplain}

\end{document}